\newtheorem{theorem}{Theorem}[section]
\newtheorem{lemma}[theorem]{Lemma}
\theoremstyle{definition}
\newtheorem{definition}[theorem]{Definition}
\newtheorem{example}[theorem]{Example}
\theoremstyle{remark}
\newtheorem{remark}[theorem]{Remark}
\newtheorem{corollary}[theorem]{Corollary}
\numberwithin{equation}{section}
\begin{document}
\title[A generalized NSFD method for  autonomous dynamical systems]
{A generalized nonstandard finite difference method for a class of autonomous dynamical systems and its applications}
\author{Manh Tuan Hoang}
\address{Department of Mathematics, FPT University, Hoa Lac Hi-Tech Park, Km29 Thang Long Blvd, Hanoi, Viet Nam}
%School of Mathematics and Natural Sciences,
%University of Wuppertal, Germany}
\email{tuanhm14@fe.edu.vn; hmtuan01121990@gmail.com}
%\thanks{Support information for the second author.}
%    General info
\subjclass[2020]{Primary 34A45, 65L05}
%, 14E20; Secondary 46E25, 20C20}
%\date{January 1, 1994 and, in revised form, June 22, 1994.}

\dedicatory{This paper is dedicated in honor of Ronald E. Mickens' 80th birthday.}
\keywords{Mickens' methodology, nonstandard finite difference method, dynamic consistency, positivity, stability, conservation laws}
%
%\linenumbers
\begin{abstract}
In this work, a class of continuous-time autonomous dynamical systems describing many important phenomena and processes arising in real-world applications is considered. We apply the nonstandard finite difference (NSFD) methodology proposed by Mickens to design a generalized NSFD method for the dynamical system models under consideration. This method is constructed based on a novel non-local approximation for the right-side functions of the dynamical systems.  It is proved by rigorous mathematical analyses that the NSFD method is dynamically consistent with respect to positivity, asymptotic stability and three classes of conservation laws, including direct conservation, generalized conservation and sub-conservation laws. Furthermore,  the NSFD method is easy to be implemented and can be applied to solve a broad range of mathematical models arising in real-life. Finally, a set of numerical experiments is performed to illustrate the theoretical findings and to show advantages of the proposed NSFD method.
\end{abstract}
\maketitle
%%%%%%%%%%%%%%%%%%%%%%%%%%%%%%%%%%%%%%%%%%%
%\section*{This is an unnumbered first-level section head}
%This is an example of an unnumbered first-level heading.
%\specialsection*{This is a Special Section Head}
%This is an example of a special section head%
%%%%%%%%%%%%%%%%%%%%%%%%%%%%%%%%%%%%%%%%%%%%%%%%%%%%%%%%%%%%%%%%%%
\section{Introduction}\label{intro}
Many important problems in real-world situations can be mathematically modeled by autonomous dynamical systems of the general form:
%%%%
%%%
\begin{equation}\label{eq:1}
\dfrac{dy(t)}{dt} = f(y(t)), \quad t \geq 0, \quad \quad y(0) = y_0 \in \mathbb{R}^n,
\end{equation}
where $y$ is a vector function of $t$ with $n$ components; $f$ is a function of $y$ and it is assumed to satisfy suitable conditions that guarantee solutions of the model \eqref{eq:1} exist and are unique (see, for instance, \cite{Allen, Brauer, Edelstein-Keshet, Khalil, Martcheva, Mattheij, Smith, Stuart}). The solutions of the model \eqref{eq:1} often possess  several important properties, typically positivity, asymptotic stability, boundedness, periodicity and physical properties. \par
In this work, we consider the model \eqref{eq:1} under the following two conditions:
\begin{itemize}
\item \textbf{(C1):} There exists a real number $\alpha$ such that
\begin{equation}\label{eq:2}
f(y) + \alpha y \geq 0 \quad \mbox{for all} \quad y \geq 0.
\end{equation}
Here the notation $\geq$  is meant entry-wise for vectors.
\item \textbf{(C2):} The set of equilibria is finite and all the equilibria are hyperbolic.
\end{itemize}
The condition \textbf{(C1)} implies that the set $\mathbb{R}^n_+ = \big\{y \in \mathbb{R}^n| y \geq 0\big\}$ is a positively invariant set of the model \eqref{eq:1}, that is, $y(t) \geq 0$ whenever $y_0 \geq 0$ (see \cite{Horvath, Smith}), meanwhile, the condition \textbf{(C2)} means that the local asymptotic stability (LAS) of all the equilibria can be determined by the Lyapunov's indirect method (see \cite{Khalil, LaSalle, Lyapunov, Stuart}).  Also, in many cases the model \eqref{eq:1} can satisfy three classes of conservation laws, which include direct conservation, generalized conservation and sub-conservation laws \cite{Mickens6}. {It is easy to find a large number of mathematical models in real-world applications with the above-mentioned properties. Below, we list some notable models that satisfy the conditions \textbf{(C1)} and \textbf{(C2)} and/or the conservation laws
\begin{itemize}
\item mathematical models in biology, ecology and epidemiology \cite{Allen, Brauer, Edelstein-Keshet, Martcheva};
\item  models for interacting species \cite{Brauer};
\item chemostat systems \cite{Smith};
\item epidemic models for infectious diseases \cite{Hethcote, Mena-Lorca};
\item  metapopulation models \cite{Amarasekare, Keymer};
\item single-species population models \cite{Cooke};
\item predator-prey models with linear prey growth and Beddington-DeAngelis functional response \cite{DeAngelis, Dimitrov4};
\item a simple vaccination model with multiple endemic states \cite{Kribs-Zaleta};
\item a vaccination model with non-linear incidence \cite{Gumel1};
\item a two-stage epidemic model with generalized non-linear incidence \cite{Moghadas};
\item a generalization of the Kermack-McKendrick epidemic model \cite{Capasso}.
\end{itemize}
{It should be emphasized that} dynamically consistent NSFD schemes for most of the above models have not been studied.\par
}
%%%
%%
Our main objective is to construct a numerical method preserving the positivity, LAS  and conservation laws of the model \eqref{eq:1} for all the values of the step size. To achieve this goal,  we will apply the \textit{Mickens' methodology} \cite{Mickens1, Mickens2, Mickens3, Mickens4, Mickens5} to propose a dynamically consistent \textit{nonstandard finite difference (NSFD)} method for the model \eqref{eq:1}. We recall that the concept of NSFD schemes was first introduced by Mickens to overcome usual numerical instabilities caused by standard finite difference (SFD) schemes as well as drawbacks of SFD schemes \cite{Mickens1, Mickens2, Mickens3, Mickens4, Mickens5, Mickens0}. One of the main and outstanding advantages of NSFD schemes is that they have the ability to preserve essential mathematical properties of the corresponding differential equation models regardless of the values of the step size; furthermore, they are easy to be implemented. Therefore, NSFD schemes are suitable and effective to simulate dynamics of differential equation models over very long time periods. During the past several decades, NSFD schemes have been strongly developed by mathematicians and engineers and have become one of the most powerful methods for solving differential equations. Nowadays, NSFD schemes have been widely used for ordinary differential equations, partial differential equations, delay differential equations, fractional differential equations and integro-differential equations (see, for instance, \cite{Adamu, Adekanye, Anguelov, Anguelov1, Anguelov2, Anguelov3, Arenas, Chapwanya, Cresson, Cresson1, Dimitrov1, Dimitrov2, Dimitrov3, Ehrhardt, Fatoorehchi, Garba, Hoang1, Kojouharov, Mickens1, Mickens2, Mickens3, Mickens4, Mickens5, Mickens0, Mickens6, Mickens7, Mickens8, Mickens9, Mickens10, Mickens11, Mickens12, Mickens12, Mickens13, Mickens14, Mickens15, Mickens16, Patidar1, Patidar2, Roeger1, Roeger2, Roeger3, Roeger4, Verma, Wood0, Wood1, Wood2} and references therein). Here, we refer the readers to \cite{Mickens4, Patidar1, Patidar2} for notable reviews of NSFD schemes.\par
It is well-known that  the positivity and LAS are essential properties of most systems in biology, ecology and epidemiology (see, for example, \cite{Allen, Brauer, Edelstein-Keshet, Khalil, Martcheva, Smith, Stuart}). {For this reason, the construction of NSFD schemes} preserving these properties is very important and has attracted the attention of many researchers. Below,  we mention some notable results in this topic.\\
In an early work \cite{Anguelov}, Anguelov and Lubuma introduced an NSFD method preserving monotonic properties of solutions of differential equations and the LAS of hyperbolic equilibrium points. In 2005 and 2007, Dimitrov and Kojouharov in \cite{Dimitrov1, Dimitrov2} analyzed stability-preserving NSFD schemes based on the $\theta$-methods and the second-order Runge-Kutta methods for general autonomous dynamical systems. Note that the NSFD schemes constructed in \cite{Anguelov, Dimitrov1, Dimitrov2} are not positive. In  \cite{Dimitrov3, Wood0}, dynamically consistent NSFD schemes for general productive-destructive systems were formulated. In 2015, Wood and Kojouharov in \cite{Wood1} designed an NSFD method preserving the positivity of solutions and the local behavior of dynamical systems near equilibria. In \cite{Cresson}, Cresson and Pierret studied the construction of NSFD schemes for a general class of two-dimensional dynamical systems including many models in population dynamics. In 2020, Dang and Hoang constructed and analyzed high-order nonstandard Runge-Kutta methods for a class of autonomous dynamical systems possessing the positivity and LAS. Very recently, Hoang in \cite{Hoang} proposed a second-order NSFD method that simultaneously preserves  the positivity and LAS of one-dimensional autonomous dynamical systems. Besides, many results focusing on NSFD schemes preserving the positivity and LAS of specific models in biology, ecology, biology and other fields are also remarkable (see, for example, \cite{Adamu, Adekanye, Anguelov2, Roeger1, Roeger2, Wood2} and references therein).\par
%%%
%%
%%
Motivated and inspired by the above reasons, we will apply the NSFD methodology proposed by Mickens to construct a dynamically consistent NSFD scheme for the model \eqref{eq:1}. This scheme is constructed based on a novel non-local approximation for the right-side functions.  It is proved by rigorous mathematical analyses that the NSFD scheme preserves the positivity, asymptotic stability and conservation laws for all the values of the step size.  Moreover, the constructed NSFD scheme is easy to be implemented and can be used to solve a broad range of mathematical problems arising in real-life. This claim is also confirmed by a set of illustrative numerical examples.\par
The plan of this work is as follows:\\
Section \ref{sec2} provides some basic concepts and preliminaries. The NSFD scheme is introduced and analyzed in Section \ref{sec4}. A set of numerical experiments is investigated and reported in Section \ref{sec5}. Finally, conclusions, remarks and some open problems are given in Section \ref{sec6}.
%%%
\section{Definitions and Preliminaries}\label{sec2}
%%\cite{Allen, Brauer, Edelstein-Keshet,Elaydi,  Khalil, LaSalle, Lyapunov,  Martcheva, Mattheij, Mickens1, Mickens2, Mickens3, Mickens4, Mickens5, Mickens0, Mickens6, Smith, Stuart}
In this section, we recall from literature some essential concepts and preliminaries related to qualitative theory of continuous and discrete dynamical systems, conservation laws and NSFD schemes.
\subsection{Stability of equilibria of continuous-time dynamical systems}
Consider the continuous-time dynamical system \eqref{eq:1}. We recall that a point $y^* \in \mathbb{R}^n$ is called {an equilibrium point of System \eqref{eq:1}} if $f(y^*) = 0$ (see \cite{Khalil, Stuart}). Without loss of generality, we can always assume that $y^* = 0$.
\begin{definition}[\cite{Khalil, Stuart}]\label{def2.1}
The equilibrium point $y^* = 0$ of \eqref{eq:1} is said to be:
\begin{itemize}
\item stable, if, for each $\epsilon > 0$, there is $\delta = \delta(\epsilon)$ such that
\begin{equation*}
\|y(0)\| < \delta \quad \mbox{implies that} \quad \|y(t)\| < \epsilon, \quad \forall t \geq 0;
\end{equation*}
\item unstable if it is not stable;
\item (locally) asymptotically stable if it is stable and $\delta$ can be chosen such that
\begin{equation*}
\|y(0)\| < \delta \quad \mbox{implies that} \quad \lim_{t \to \infty}y(t) = 0.
\end{equation*}
\end{itemize}
\end{definition}
The following theorem is known as the Lyapunov's indirect method \cite{Khalil, LaSalle, Lyapunov, Stuart}.
\begin{theorem}\label{theorem1}
Let $y^* = 0$ be an equilibrium point for the system \eqref{eq:1}, where $f: D \to \mathbb{R}^n$ is continuously differentiable and $D$ is a neighborhood of the origin. Let
\begin{equation*}
A = \dfrac{\partial f}{\partial y}(y)\bigg|_{y = 0}.
\end{equation*}
\end{theorem}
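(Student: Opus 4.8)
The plan is to establish the statement by Lyapunov stability theory, using a quadratic Lyapunov function built from the solution of a matrix Lyapunov equation. First I would split the right-hand side as
\begin{equation*}
f(y) = Ay + g(y), \qquad g(y) := f(y) - Ay,
\end{equation*}
so that $g$ isolates the nonlinear remainder. Because $f$ is continuously differentiable with $f(0)=0$ and $A = \partial f/\partial y(0)$, we have $g(0)=0$ and $\partial g/\partial y(0)=0$; hence $g$ is genuinely higher order, and for every $\gamma>0$ there is a radius $r>0$ such that $\|g(y)\| \le \gamma\|y\|$ whenever $\|y\|<r$. Deriving this uniform sublinear bound from the $C^1$ hypothesis (via the integral mean-value form $g(y)=\int_0^1 \partial g/\partial y(sy)\,y\,ds$ and continuity of $\partial g/\partial y$ at $0$) is the first technical step.

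For the asymptotically stable case, in which every eigenvalue of $A$ has negative real part (so $A$ is Hurwitz), I would invoke the standard fact that for any symmetric positive definite $Q$ the Lyapunov equation $PA+A^{\top}P=-Q$ admits a unique symmetric positive definite solution $P$. Taking $V(y)=y^{\top}Py$ as a candidate, I would differentiate along trajectories of \eqref{eq:1}:
\begin{equation*}
\dot V(y) = y^{\top}(PA+A^{\top}P)y + 2\,y^{\top}Pg(y) = -y^{\top}Qy + 2\,y^{\top}Pg(y).
\end{equation*}
Using $y^{\top}Qy \ge \lambda_{\min}(Q)\|y\|^2$ together with $2\,y^{\top}Pg(y) \le 2\|P\|\,\gamma\,\|y\|^2$ on the ball $\|y\|<r$ gives $\dot V(y) \le -\big(\lambda_{\min}(Q)-2\gamma\|P\|\big)\|y\|^2$. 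Choosing $\gamma$ small enough that $2\gamma\|P\|<\lambda_{\min}(Q)$ renders $\dot V$ negative definite on a punctured neighborhood of the origin, and Lyapunov's direct method then yields local asymptotic stability.

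For the unstable case, in which at least one eigenvalue of $A$ has positive real part, the cleanest route is Chetaev's instability theorem. I would change coordinates so that $A$ becomes block-diagonal with one block whose spectrum lies strictly in the open right half-plane and a complementary block, solve the corresponding Lyapunov equations on each invariant subspace, and assemble an indefinite quadratic form whose derivative is positive on a cone adjacent to the origin; the same sublinear bound on $g$ guarantees that the nonlinear terms cannot destroy this positivity near $0$. I expect this instability half to be the main obstacle, since it requires splitting the spectrum into stable and unstable parts and controlling the mixed indefinite quadratic form, whereas the stable half follows almost mechanically once the Lyapunov equation and the estimate on $g$ are in place.
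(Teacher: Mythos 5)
The paper offers no proof of this statement at all: it is quoted as the classical Lyapunov indirect method, with the proof delegated to the cited references \cite{Khalil, LaSalle, Lyapunov, Stuart}, and it is then used (through condition \textbf{(C2)}) to read off stability of equilibria of \eqref{eq:1} from the spectrum of the Jacobian. Your proposal is therefore not an alternative to the paper's argument but a reconstruction of the standard proof in exactly those references, most closely Khalil's: the splitting $f(y)=Ay+g(y)$ with the sublinear estimate $\|g(y)\|\le\gamma\|y\|$ near the origin, the quadratic function $V(y)=y^{\top}Py$ obtained from the Lyapunov equation $PA+A^{\top}P=-Q$ in the Hurwitz case, and a Chetaev cone argument in the unstable case. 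The stable half of your plan is complete and correct as written.

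The unstable half, however, contains one step that would fail as stated. The instability hypothesis requires only that \emph{some} eigenvalue satisfy $\mathrm{Re}\,\lambda>0$; it does not exclude eigenvalues on the imaginary axis (hyperbolicity is not assumed in the theorem itself --- that restriction appears only in the paper's Remark \ref{remark2.2}). Hence, after your block-diagonalization, the complementary block $A_2$ may have purely imaginary eigenvalues, and then the Lyapunov equation $P_2A_2+A_2^{\top}P_2=-Q_2$ with $Q_2$ positive definite has \emph{no} positive definite solution: existence of such a $P_2$ is equivalent to $A_2$ being Hurwitz. So ``solve the corresponding Lyapunov equations on each invariant subspace'' is not available on that block. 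The standard repair (and the one in Khalil) is a spectral shift: let $\delta=\min\{\mathrm{Re}\,\lambda:\ \mathrm{Re}\,\lambda>0\}$, split the spectrum into the block $B_1$ with $\mathrm{Re}\,\lambda\ge\delta$ and the block $B_2$ with $\mathrm{Re}\,\lambda\le 0$, and solve the Lyapunov equations for the shifted matrices $-(B_1-\tfrac{\delta}{2}I)$ and $B_2-\tfrac{\delta}{2}I$, both of which are Hurwitz. This yields $P_1B_1+B_1^{\top}P_1=I+\delta P_1$ and $P_2B_2+B_2^{\top}P_2=-I+\delta P_2$, so that along the linearized flow the Chetaev function $V(z)=z_1^{\top}P_1z_1-z_2^{\top}P_2z_2$ satisfies $\dot V=\|z\|^2+\delta V\ge\|z\|^2$ on the cone $\{V>0\}$, and your bound $\|g(y)\|\le\gamma\|y\|$ with $\gamma$ small then preserves positivity of $\dot V$ there; Chetaev's theorem gives instability. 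Alternatively, if you only need the theorem in the paper's setting, where all equilibria are hyperbolic, the complementary block is genuinely Hurwitz and your argument goes through verbatim.
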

Then,
\begin{itemize}
\item $y^*$ is LAS if $Re \lambda_i < 0$ for all eigenvalues of $A$;
\item $y^*$ is unstable if $Re \lambda_i > 0$ for one or more the eigenvalues of $A$.
\end{itemize}
{
\begin{remark}\label{remark2.2}
%\begin{itemize}
An equilibrium point $y^*$ of \eqref{eq:1} is said to be hyperbolic if none of the eigenvalues of the matrix $A$ lies on the imaginary axis and non-hyperbolic otherwise (see \cite{Stuart}). Hence, Theorem \ref{theorem1} is only applicable for hyperbolic equilibrium points.
%\end{itemize}
\end{remark}
}
%%%%%%%%%%%%%%
\begin{theorem}[\cite{Horvath, Smith}]\label{theorem2.1}
Suppose that $f$ in \eqref{eq:2}  has the property that solutions of initial value problems $y(0) = y_0 \geq 0$ are unique and, for all $i$, $f_i(y) \geq 0$ whenever $y \geq 0$ satisfies $y_i = 0$. Then $y(t) \geq 0$ for all $t \geq 0$ for which it is defined, provided $y(0) \geq 0$.
\end{theorem}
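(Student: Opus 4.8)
The plan is to prove positive invariance of the nonnegative orthant $\mathbb{R}^n_+$ by a standard regularization-and-limit argument, which upgrades the non-strict tangency hypothesis (for each $i$, $f_i(y) \geq 0$ whenever $y \geq 0$ and $y_i = 0$) into a strictly inward-pointing field that can be handled by a first-exit-time argument. Fix an initial value $y(0) \geq 0$, let $y$ be the corresponding (unique) solution, and let $[0,T]$ be an arbitrary compact subinterval of its maximal interval of existence. For each $\epsilon > 0$ I would introduce the perturbed problem
\begin{equation*}
\dot{y}^\epsilon = f(y^\epsilon) + \epsilon \mathbf{1}, \qquad y^\epsilon(0) = y(0), \qquad \mathbf{1} = (1,\dots,1)^\top .
\end{equation*}
The gain is that on the relevant boundary the perturbed field points strictly inward: if $y^\epsilon \geq 0$ and $y^\epsilon_i = 0$, then the tangency hypothesis gives $f_i(y^\epsilon) \geq 0$, so the $i$-th component of the field equals $f_i(y^\epsilon) + \epsilon \geq \epsilon > 0$.

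Next I would run a first-exit-time argument on the maximal interval $I^\epsilon$ of $y^\epsilon$. Suppose for contradiction that $y^\epsilon$ leaves $\mathbb{R}^n_+$, and set $t_1 = \inf\{t \in I^\epsilon : y^\epsilon_j(t) < 0 \text{ for some } j\}$. By continuity together with $y^\epsilon(0) \geq 0$ we have $y^\epsilon(t_1) \geq 0$, and there is at least one index $j$ with $y^\epsilon_j(t_1) = 0$. For every such index, $\dot{y}^\epsilon_j(t_1) = f_j(y^\epsilon(t_1)) + \epsilon \geq \epsilon > 0$, so $y^\epsilon_j$ is strictly increasing at $t_1$ and therefore strictly positive on some interval $(t_1, t_1 + \eta)$; the components that are strictly positive at $t_1$ stay positive on a neighborhood as well. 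Hence all components of $y^\epsilon$ are strictly positive on $(t_1, t_1 + \eta)$, contradicting the definition of $t_1$. Consequently $y^\epsilon(t) \geq 0$ throughout $I^\epsilon$.

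Finally I would pass to the limit $\epsilon \to 0^+$. Since $f$ is continuous and solutions of the initial value problems are unique (the standing hypothesis), continuous dependence on parameters guarantees that for all sufficiently small $\epsilon$ the solution $y^\epsilon$ exists on the whole of $[0,T]$ and converges to $y$ uniformly there. Because $\mathbb{R}^n_+$ is closed and each $y^\epsilon(t) \geq 0$, the uniform limit satisfies $y(t) \geq 0$ for every $t \in [0,T]$; as $T$ was an arbitrary compact subinterval, $y(t) \geq 0$ for all $t$ for which it is defined.

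The main obstacle I anticipate is the limit step, not the boundary argument: one must ensure that the maximal interval of existence of $y^\epsilon$ does not collapse below $[0,T]$ as $\epsilon \to 0$, which is exactly what the continuous-dependence theorem supplies once uniqueness is assumed. A minor bookkeeping point in the exit-time argument is that several components may vanish simultaneously at $t_1$; this is harmless, since the strict inequality $\dot{y}^\epsilon_j(t_1) > 0$ holds for each such index at once. Conceptually, the statement is the Nagumo invariance criterion specialized to the orthant, the tangency hypothesis being precisely the subtangentiality condition on $\partial \mathbb{R}^n_+$, and the perturbation argument above is simply a self-contained route to that special case.
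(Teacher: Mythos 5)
Your proof is correct, but there is nothing in the paper to compare it against: Theorem \ref{theorem2.1} is quoted there without proof, as a known result cited from Horv\'ath and from Smith--Waltman, so the paper's ``own proof'' is simply a pointer to the literature. Your regularization argument is in fact the standard route to this quasi-positivity (Nagumo-type) criterion and is essentially the argument given in those sources: perturb to $\dot y^{\epsilon}=f(y^{\epsilon})+\epsilon\mathbf{1}$ so the field points strictly inward on $\partial\mathbb{R}^n_+$, rule out a first exit time, and pass to the limit. Both halves of your argument are sound; in particular you correctly identify that the delicate step is the limit, where the hypothesis of uniqueness for the unperturbed problem is exactly what makes the Kamke-type continuous-dependence theorem applicable, guaranteeing that the $y^{\epsilon}$ exist on all of $[0,T]$ for small $\epsilon$ and converge uniformly to $y$. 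Two minor points of bookkeeping: your phrase ``the solution $y^{\epsilon}$'' presumes uniqueness for the perturbed problem, which is neither needed nor guaranteed (any solution of the perturbed problem serves, and the continuous-dependence theorem covers this); and the claim that some component vanishes at the exit time $t_1$ deserves one line (a sequence $s_k\downarrow t_1$ with a negative component plus pigeonhole over the finitely many indices). Neither affects the validity of the argument.
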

\begin{remark}
We infer from Theorem \ref{theorem2.1}  that the set $\mathbb{R}^n_+$ is a positively invariant set of the model \eqref{eq:1} if the condition \eqref{eq:2} holds.
\end{remark}
%%%%%%%%%%%%%%%%
\subsection{Conservation laws}
We consider three classes of conservation laws for the model \eqref{eq:1}, that are  direct conservation, generalized conservation and sub-conservation laws \cite{Mickens6}.\par
{For the system \eqref{eq:1}, let us denote by $P_T(t) = y_1(t) + y_2(t) + \ldots + y_n(t)$.} We say that the system \eqref{eq:1} satisfies
\begin{itemize}
\item a direct conservation law (DCL) if
\begin{equation*}
\dfrac{dP_T}{dt} = 0,
\end{equation*}
which implies that $P_T$ is constant;
\item a generalized conservation law  (GCL) if
\begin{equation*}
\dfrac{dP_T}{dt} = a_1 - b_1P_T, \quad a_1, b_1 > 0,
\end{equation*}
which implies that $P_T(t)$ monotonically converges to $a_1/b_1$ as $t \to \infty$;
\item a sub-conservation law (SCL) if a sub-set of the populations, $P_S(t)$, where
\begin{equation*}
P_S(t) = P_{1S}(t) + P_{2S}(t)  + \ldots + P_{mS}(t), \quad 2 \leq m \leq n,
\end{equation*}
has the property
\begin{equation*}
\dfrac{dP_T}{dt} = 0,
\end{equation*}
or
\begin{equation*}
\dfrac{dP_T}{dt} = a_1 - b_1P_T.
\end{equation*}
\end{itemize}
In \cite{Mickens6}, Mickens and  Washington introduced NSFD discretizations of {some mathematical models satisfying the above conservation laws.}
\subsection{Stability of equilibria of discrete-time dynamical systems}
Consider a general discrete-time dynamical system governed by first-order difference equations of the form
\begin{equation}\label{eq:3}
y_{k + 1} = g(y_k), \quad y_0 \in \mathbb{R}^n,
\end{equation}
where $g: D \to \mathbb{R}$ and $D \subset \mathbf{R}^n$ is the domain of the function $g$. The vector $y_0$ is called the initial value or initial data. Note that the uniqueness of the sequence $\{y_k\}_{k = 0}^{\infty}$ is automatic if it exists.\par
A point $y^* \in \mathbf{R}^n$ is called a fixed point or an equilibrium point of the system \eqref{eq:3} if $g(y^*) = y^*$ (see \cite{Elaydi, Stuart}). 
\begin{definition}[\cite{Elaydi, Stuart}]\label{def2.2}
Let $y^*$ be an equilibrium point of the system \eqref{eq:3}. Then, $y^*$ is said to be:
\begin{itemize}
\item stable if given $\epsilon > 0$ there exists $\delta > 0$ such that
\begin{equation*}
\|y_ - y^*\| < \delta \quad \mbox{implies} \quad \|y_k - y^*\| < \epsilon,
\end{equation*}
for all $k > 0$.
\item unstable if it is not stable;
\item attracting if there exists $\eta > 0$ such that
\begin{equation*}
\|y_0 - y^*\| < \eta \quad \mbox{implies} \quad \lim_{k \to \infty}y_k = y^*;
\end{equation*}
\item (locally) asymptotically stable if it is stable and attracting.
\end{itemize}
\end{definition}
\begin{theorem}[\cite{Elaydi, Stuart}]\label{theorem2.2}
Let $f \in \mathcal{C}^2(\mathbb{R}^n, \mathbb{R}^n)$. Then a fixed point $y^*$ of the system \eqref{eq:3} is locally asymptotically stable if the eigenvalues of $df(y^*)$ lie strictly inside the unit circle. If any of the eigenvalues lie outside the unit circle the fixed point is unstable.
\end{theorem}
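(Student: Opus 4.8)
The plan is to establish the result by linearization, treating the nonlinear map as a perturbation of its Jacobian at the fixed point. First I would translate coordinates so that $y^* = 0$, which is harmless since the assertion is purely local, and write the map of \eqref{eq:3} (denoted $g$, with $A := dg(0)$ its Jacobian) in the Taylor form $g(y) = Ay + R(y)$. Because $g \in \mathcal{C}^2$, the remainder admits a quadratic bound $\|R(y)\| \le C\|y\|^2$ on a neighborhood of the origin, so that $R(y) = o(\|y\|)$ near $0$; this is the only place where the smoothness hypothesis is genuinely used.

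For the asymptotic-stability claim, the key idea is to replace the given norm by an adapted (Lyapunov) norm. Since every eigenvalue of $A$ satisfies $|\lambda_i| < 1$, the spectral radius obeys $\rho(A) < 1$, and a standard fact guarantees, for each $\epsilon > 0$, a vector norm $\|\cdot\|_*$ whose induced operator norm satisfies $\|A\|_* \le \rho(A) + \epsilon$. Choosing $\epsilon$ so small that $a := \rho(A) + \epsilon < 1$, I would estimate, for $y_k$ in a ball of radius $\delta$,
\[
\|y_{k+1}\|_* \le \|A\|_*\,\|y_k\|_* + \|R(y_k)\|_* \le \bigl(a + C'\delta\bigr)\|y_k\|_*,
\]
where $C'$ absorbs the norm-equivalence constants. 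Shrinking $\delta$ so that $q := a + C'\delta < 1$ turns this into the contraction estimate $\|y_{k+1}\|_* \le q\|y_k\|_*$, which simultaneously keeps the orbit inside the ball (stability) and forces $\|y_k\|_* \to 0$ geometrically (attraction). Equivalence of norms then transfers both conclusions back to the original norm.

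For the instability claim, I would exploit the invariant spectral splitting $\mathbb{R}^n = E_s \oplus E_u$ induced by $A$, where $E_u$ is the span of the generalized eigenspaces associated with eigenvalues of modulus $> 1$; by hypothesis $E_u \neq \{0\}$. On $E_u$ the restriction $A|_{E_u}$ is expanding, so in a suitable adapted norm one has $\|A\xi\|_* \ge b\|\xi\|_*$ with $b > 1$ for all $\xi \in E_u$. Writing $\pi_u$ for the projection onto $E_u$, I would track the growth of $\|\pi_u y_k\|_*$ and show, using the quadratic remainder bound to dominate the cross-terms in a sufficiently small neighborhood, that any orbit starting with a nonzero unstable component is repelled from the origin, so that $y^*$ fails to be stable.

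The routine parts are the Taylor estimate and the norm-equivalence bookkeeping. The main obstacle is the instability half: controlling the coupling between the stable and unstable components through the remainder so that the expansion along $E_u$ is not cancelled. The cleanest way I would handle this is a cone-invariance argument — showing that the cone $\{\,\|\pi_u y\|_* \ge \|\pi_s y\|_*\,\}$ is forward invariant near $0$ and that $\|\pi_u y_k\|_*$ grows along it — which yields the escape needed for instability while sidestepping the full stable-manifold machinery.
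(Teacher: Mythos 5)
Your proposal is correct, but there is nothing in the paper to compare it against: Theorem \ref{theorem2.2} is stated as a known result imported from the cited references (Elaydi; Stuart and Humphries) and the paper supplies no proof of it, using it only as a tool in the stability analysis of the NSFD scheme. Measured against the standard literature argument, your outline is essentially the textbook proof and is sound. The stability half --- translate to the origin, write $g(y)=Ay+R(y)$ with $\|R(y)\|\le C\|y\|^2$, pass to an adapted norm with $\|A\|_*\le\rho(A)+\epsilon<1$, and close the contraction estimate $\|y_{k+1}\|_*\le q\|y_k\|_*$ with $q<1$ on a small ball --- is exactly how the cited texts argue, and it correctly delivers both stability and attraction at once. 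The instability half via the spectral splitting and a cone-invariance argument is a legitimate way to avoid the stable-manifold theorem, and your estimates do close: expansion by $b-O(\delta)$ on the unstable component versus growth of at most $c+O(\delta)$ on the complementary component keeps the cone invariant and forces geometric escape. One point deserves explicit care in a write-up: the theorem's instability hypothesis only requires \emph{some} eigenvalue outside the unit circle, so the complementary subspace you call $E_s$ may contain eigenvalues of modulus exactly $1$; the adapted norm there only gives $\|A|_{E_s}\|_*\le 1+\epsilon$, not a contraction. Your cone argument survives this because all it needs is $c<b$ with $b>1$, which can be arranged by taking $\epsilon$ small relative to $\min_{|\lambda|>1}|\lambda|-1$, but you should state that inequality as the actual requirement rather than implicitly treating $E_s$ as strictly contracting.
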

\begin{remark}
A fixed point $y^*$ is said to be hyperbolic if none of the eigenvalues of the matrix $df(y^*)$ lies on the unit circle and non-hyperbolic if otherwise (see \cite{Stuart}). Hence,  Theorem \ref{theorem2.2} only applicable for hyperbolic equilibrium points.	
\end{remark}
\subsection{Nonstandard finite difference schemes}
Consider a finite difference scheme  that approximates solutions of \eqref{eq:1} in the form
\begin{equation}\label{eq:4}
D_{\Delta t}(y_k) = F_{\Delta t}(f; y_k),
\end{equation}
where $D_{\Delta t}(y_k) \approx dy/dt$, $F_{\Delta t}(f; y_k) \approx f(y)$ and $t_k = k\Delta t$, $\Delta t$ is the step size.\par
According to the Mickens' methodology \cite{Mickens1, Mickens2, Mickens3, Mickens4, Mickens5, Mickens0}, an NSFD scheme for the equation \eqref{eq:1} is a discrete model constructed based on a set of six rules. In particular, NSFD schemes for first-order differential equations can be defined as follows \cite{Anguelov, Anguelov1, Dimitrov2}.
\begin{definition}
The finite difference scheme \eqref{eq:4} is called an NSFD scheme if at least one of the following conditions is satisfied:
\begin{itemize}
\item $D_{\Delta t}(y_k) = \dfrac{y_{k + 1} - y_k}{\phi(\Delta t)}$, where $\phi(\Delta t) = \Delta t + \mathcal{O}(\Delta t^2)$ is a non-negative function and is called a nonstandard denominator function;
\item $F_{\Delta t}(f; y_k) = g(y_k, y_{k + 1}, \Delta t)$, where $g(y_k, y_{k + 1}, \Delta t)$ is a non-local approximation of the right-hand side of the system \eqref{eq:1}.
\end{itemize}
\end{definition}
\begin{definition}[\cite{Anguelov1, Anguelov2}]
Assume that the solutions of the equation \eqref{eq:1} satisfy some property $\mathcal{P}$. The numerical scheme \eqref{eq:4} is called (qualitatively) stable with respect to property $\mathcal{P}$ (or $\mathcal{P}$-stable), if for every value of $\Delta t > 0$ the set of solutions of \eqref{eq:4} satisfies property $\mathcal{P}$.
\end{definition}
%%%
%
%%
\begin{definition}[\cite{Anguelov, Anguelov1, Mickens0}]
Consider the differential equation $dy/dt = f(y)$. Let a finite difference scheme for the equation be $y_{k+1} = F(y_k; \Delta t)$. Let the differential equation and/or its solutions have property $\mathcal{P}$. The discrete model equation is dynamically consistent with the differential equation if it and/or its solutions also have property $\mathcal{P}$.
\end{definition}
{
\begin{definition}(see \cite{Wood1})
The NSFD method \eqref{eq:4} is called \textit{positive }or \textit{positivity-preserving}, if, for any value of the step size $\Delta t$, and $y_0 \in \mathbb{R}_+^n$ its solution remains positive, i.e., $y_k \in \mathbb{R}_+^n$ for all $k \in \mathbb{N}$.
\end{definition}
}
\section{Construction of the NSFD scheme}\label{sec4}
In this section, we construct a generalized NSFD scheme for the model \eqref{eq:1} and investigate its qualitative dynamical properties.
\subsection{Derivation of the NSFD scheme and its positivity}\label{sub4.1}
%%%%%%%%%%%%%%%%%%%%%%%%%%%%%%%%%%%%%%%
First, suppose that the function $f$ satisfies the condition \textbf{(C1)} with a real number $\alpha$. We use the following non-local approximation for the function $f$
\begin{equation}\label{eq:11}
f(y(t_k)) = f(y(t_k)) + \big(my(t_k) -my(t_k)\big) \approx f(y_k) + my_k - my_{k + 1},
\end{equation}
where $m$ is a real number. By combining \eqref{eq:11} with the nonstandard approximation for the first-order derivative $dy/dt$, we obtain the following NSFD model for the system \eqref{eq:1}:
\begin{equation}\label{eq:16}
\dfrac{y_{k + 1} - y_k}{\phi(\Delta t)} = f(y_k) + my_k - my_{k + 1},
\end{equation}
%%%
%%%
where $\phi(\Delta t) = \Delta t + \mathcal{O}(\Delta t^2)$ as $\Delta t \to 0$. The explicit form of the scheme \eqref{eq:16} is given by
\begin{equation}\label{eq:17}
y_{k + 1} = \dfrac{y_k + \phi f(y_k) + \phi my_k}{1 + m\phi},
\end{equation}
or equivalently,
\begin{equation}\label{eq:18}
y_{k + 1} = y_k + \dfrac{\phi}{1 + m\phi}f(y_k).
\end{equation}
{
\begin{remark}
Although the rule of nonlocal approximation of the NSFD methodology states that nonlinear terms in the right-hand side $f(y)$ should be approximated in a nonlocal way, in many cases linear terms with negative coefficients, namely, $-cy$ ($c > 0$) can be also nonlocally approximated by $-cy_{k+1}$ to guarantee the positivity of the corresponding NSFD schemes (see, for instance, \cite{Adamu, Anguelov2, Arenas, DangHoang1, Hoang3, Hoang5, Martin-Vaquero1, Martin-Vaquero2, Mickens6}). Hence, the approximation \eqref{eq:11} can be considered as a nonlocal approximation, which follows the NSFD methodology.
\end{remark}
}
%%%%%%%%%%%%%%%%%%%%%%%%%%%%%%%%%%%%%%%%%%%%%%%%%%%%%%%%%%%%%%%%%%%%%%%%%%%
%%%%%%%%%%%%%%%%%%%%%%%%%%%%%%%%%%%%%%%%%%%%%%%%%%%%%%%%%%%%%%%%%%%%%
%%%
\begin{lemma}\label{lemma4.1}
The NSFD sheme \eqref{eq:16} preserves the positivity of the model \eqref{eq:1} for all the values of the step size $\Delta t$ whenever
\begin{equation}\label{eq:12}
m \geq m_P := \max\{\alpha, \,0\}.
\end{equation}
\end{lemma}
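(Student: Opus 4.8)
The plan is to work directly with the explicit form \eqref{eq:17} of the scheme and to establish positivity by induction on $k$: assuming $y_0 \in \mathbb{R}^n_+$, I would show that $y_k \geq 0$ forces $y_{k+1} \geq 0$. The convenient feature of \eqref{eq:17} is that it presents $y_{k+1}$ as a single quotient, so it suffices to verify that both its scalar denominator and its vector numerator are non-negative, all inequalities being read entry-wise.

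First I would dispose of the denominator. Since $m \geq m_P = \max\{\alpha,\,0\} \geq 0$ and $\phi = \phi(\Delta t) > 0$ for every $\Delta t > 0$, we immediately obtain $1 + m\phi > 0$. Consequently the sign of $y_{k+1}$ is controlled entirely by the numerator
\begin{equation*}
N_k := y_k + \phi f(y_k) + \phi m y_k = y_k + \phi\big(f(y_k) + m y_k\big).
\end{equation*}

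The heart of the argument is to show $f(y_k) + m y_k \geq 0$ whenever $y_k \geq 0$, and here I would use the decomposition
\begin{equation*}
f(y_k) + m y_k = \big(f(y_k) + \alpha y_k\big) + (m - \alpha)\, y_k.
\end{equation*}
The first bracket is non-negative by condition \textbf{(C1)} (inequality \eqref{eq:2}) applied at the admissible argument $y_k \geq 0$ furnished by the inductive hypothesis. For the second term, since $m \geq m_P \geq \alpha$ the scalar $m - \alpha$ is non-negative, and multiplying the non-negative vector $y_k$ by a non-negative scalar preserves the entry-wise inequality, so $(m-\alpha)\,y_k \geq 0$. Adding $y_k \geq 0$ to the non-negative multiple $\phi\big(f(y_k)+m y_k\big)$ then yields $N_k \geq 0$, whence $y_{k+1} = N_k/(1+m\phi) \geq 0$; induction closes the argument for all $k \in \mathbb{N}$.

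I do not anticipate a genuine obstacle: the only points requiring care are bookkeeping ones, namely that every inequality is interpreted entry-wise and that \textbf{(C1)} is invoked only at the permissible point $y_k \geq 0$. The conceptual content lies entirely in the choice of the threshold $m_P$, which is engineered to secure two requirements simultaneously, i.e. $m \geq 0$ keeps the denominator positive while $m \geq \alpha$ lets the term $m y_k$ absorb the possibly negative part of $f(y_k)$ through \textbf{(C1)}; taking $m_P = \max\{\alpha,\,0\}$ is precisely what reconciles both conditions.
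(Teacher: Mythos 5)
Your proof is correct and follows essentially the same route as the paper: induction on $k$, the explicit form \eqref{eq:17}, and the key estimate $f(y_k) + m y_k \geq f(y_k) + \alpha y_k \geq 0$, which your decomposition $f(y_k)+my_k = \big(f(y_k)+\alpha y_k\big) + (m-\alpha)y_k$ merely rewrites. Your explicit remark that $m \geq 0$ keeps the denominator $1+m\phi$ positive is a small bookkeeping point the paper leaves implicit, but it does not change the argument.
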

%%%%%%%%%%%%%
\begin{proof}
We use mathematical induction to prove the positivity of the NSFD scheme \eqref{eq:16}. Indeed, assume that $y_k \geq 0$. Then,
\begin{equation*}
f(y_k) + my_k \geq f(y_k) + \alpha y_k \geq 0.
\end{equation*}
Combining this estimate with \eqref{eq:17}, we obtain
\begin{equation*}
y_{k + 1} = \dfrac{y_k + \phi(f(y_k) + my_k)}{1 + m\phi} \geq 0.
\end{equation*}
This is the desired conclusion. The proof is complete.
\end{proof}
%%
%%%
%%
\begin{remark}
The discretization \eqref{eq:11} guarantees that the NSFD scheme \eqref{eq:16} is consistent. Similarly to convergence analysis presented in \cite{Cresson}, we can conclude that the NSFD scheme \eqref{eq:16} is convergent of order $1$.
\end{remark}
{
\begin{remark}\label{Remark3.4new}
It is easy to see from \eqref{eq:18} that the NSFD scheme \eqref{eq:16} can be rewritten in the form
\begin{equation*}
\dfrac{y_{k + 1} - y_k}{\Phi(\Delta t)} = f(y_k),
\end{equation*}
where
\begin{equation*}
\Phi(\Delta t) = \dfrac{\phi(\Delta t)}{1 + m\phi(\Delta t)} = \phi(\Delta t) + \mathcal{O}(\phi^2(\Delta t)) = \Delta t + \mathcal{O}(\Delta t^2).
\end{equation*}
Hence, the NSFD scheme \eqref{eq:16} can be equivalently derived by using renormalization of the denominator. Here, it is important to note that conditions for the NSFD scheme \eqref{eq:16} to be dynamically consistent with the model \eqref{eq:1} will be set to $m$. 
\end{remark}
}
\subsection{Equilibria and their asymptotic stability}\label{sub4.2}
Let us denote by $\mathcal{E}_C$ and $\mathcal{E}_D$ the sets of equilibrium points of the continuous-time dynamical system \eqref{eq:1} and the discrete-time model \eqref{eq:16}, respectively. It should be emphasized that $\mathcal{E}_C \subset \mathcal{E}_D$ for most standard finite difference schemes such as the Taylor's and Runge-Kutta schemes since they often generate spurious equilibrium points, which depend on the step size \cite{Mickens1, Mickens2, Mickens3, Mickens4, Mickens5, Mickens0}. However, we infer from the system \eqref{eq:18} that:
\begin{lemma}\label{lemma4.3}
$\mathcal{E}_C = \mathcal{E}_D$ for all the  finite values of the step size $\Delta t$. In other words, the NSFD scheme \eqref{eq:16} preserves the set of equilibrium points of the model \eqref{eq:1}.
\end{lemma}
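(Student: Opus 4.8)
The plan is to characterize $\mathcal{E}_D$ directly from the explicit one-step map \eqref{eq:18} and show that the fixed-point condition collapses exactly to the continuous equilibrium condition $f(y^*) = 0$. Writing the NSFD scheme as $y_{k+1} = g(y_k)$ with
\begin{equation*}
g(y) = y + \frac{\phi(\Delta t)}{1 + m\phi(\Delta t)}\, f(y),
\end{equation*}
a point $y^*$ belongs to $\mathcal{E}_D$ precisely when $g(y^*) = y^*$, that is, when
\begin{equation*}
\frac{\phi(\Delta t)}{1 + m\phi(\Delta t)}\, f(y^*) = 0.
\end{equation*}

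The key observation I would stress is that the scalar factor multiplying $f(y^*)$ never vanishes for finite $\Delta t > 0$. Indeed, $\phi(\Delta t) > 0$ by the defining property of the denominator function, and since the standing positivity requirement \eqref{eq:12} forces $m \geq m_P = \max\{\alpha,\,0\} \geq 0$, we have $1 + m\phi(\Delta t) \geq 1 > 0$. Hence the renormalized denominator $\Phi(\Delta t) := \phi/(1+m\phi)$ introduced in Remark \ref{Remark3.4new} is a strictly positive real number, and the fixed-point equation above is equivalent to $f(y^*) = 0$.

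From this equivalence both inclusions follow at once. If $y^* \in \mathcal{E}_C$, then $f(y^*) = 0$, so $g(y^*) = y^*$ and $y^* \in \mathcal{E}_D$; conversely, if $y^* \in \mathcal{E}_D$, then $\Phi(\Delta t) f(y^*) = 0$ with $\Phi(\Delta t) \neq 0$ forces $f(y^*) = 0$, i.e.\ $y^* \in \mathcal{E}_C$. Because neither direction uses any restriction on the step size beyond $\Phi(\Delta t) \neq 0$, the identity $\mathcal{E}_C = \mathcal{E}_D$ holds for every finite value of $\Delta t$, which is the assertion of the lemma.

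There is no genuine obstacle in this argument; the only point requiring care is guaranteeing that the multiplicative factor is nonzero, which is why I would explicitly invoke $m \geq 0$ (automatic under \eqref{eq:12}) to rule out any cancellation in the denominator $1 + m\phi$. This non-vanishing is precisely the structural feature that separates the NSFD map \eqref{eq:18} from standard schemes, whose higher-degree update polynomials typically introduce additional, step-size-dependent roots and hence spurious equilibria not present in the continuous model.
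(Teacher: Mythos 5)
Your proof is correct and is essentially the paper's own argument: the paper presents Lemma \ref{lemma4.3} as an immediate consequence of the explicit form \eqref{eq:18}, and you have simply made that inference explicit by noting the fixed-point equation reduces to $f(y^*)=0$ because the scalar factor $\phi/(1+m\phi)$ never vanishes. One minor refinement: the lemma does not presuppose the positivity condition \eqref{eq:12}, so instead of invoking $m \geq m_P \geq 0$ you could observe that $1+m\phi(\Delta t) \neq 0$ is already required for the scheme \eqref{eq:17} to be well-defined, and since $\phi(\Delta t)>0$ the factor is automatically nonzero whenever the scheme makes sense.
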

%%%%%%%%%%%%%%%%%%%%%
%%%%%%%%
%
Let us denote by $\mathcal{E}_C^S$ the set of all asymptotically stable equilibrium points of the system \eqref{eq:1}. To analyze the stability of the NSFD scheme \eqref{eq:16}, we introduce the following notation
\begin{equation}\label{eq:16a}
\begin{split}
&\mathcal{F} = \bigcup_{y^* \in \mathcal{E}_C^S}\sigma(J_C(y^*)),\\
&m_S = \max_{\lambda \in \mathcal{F}}\bigg\{-\dfrac{|\lambda|^2}{2Re\lambda}\bigg\},
\end{split}
\end{equation}
where $J_C(y^*)$ is the Jacobian matrix of the system \eqref{eq:1} evaluating at $y^*$ and $\sigma(J_C(y^*))$ is the set of eigenvalues of $J_C(y^*)$. Note that $Re\lambda < 0$ for all $\lambda \in \mathcal{F}$ (see Theorem \ref{theorem1}).
%%%
\begin{theorem}\label{theorem4.1}
Let $m$ be a real number with the property that
\begin{equation}\label{eq:16c}
m \geq m_S,
\end{equation}
where $m_S$ is given in \eqref{eq:16a}. Then, the NSFD scheme \eqref{eq:16} preserves the LAS of the system \eqref{eq:1} for all the values of the step size $\Delta t$.
\end{theorem}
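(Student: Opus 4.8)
The plan is to reduce the discrete stability question to the spectral criterion of Theorem \ref{theorem2.2} via the linearization of the one-step map, and then to exploit the algebraic structure of the effective denominator $\Phi$ to obtain a bound that is uniform in the step size. By Lemma \ref{lemma4.3}, every asymptotically stable equilibrium $y^* \in \mathcal{E}_C^S$ of the continuous system is also a fixed point of the scheme \eqref{eq:16}, so it suffices to examine the eigenvalues of the Jacobian of the one-step map at such a point.

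First, I would write the scheme \eqref{eq:18} in the compact form $y_{k+1} = g(y_k)$ with $g(y) = y + \Phi(\Delta t) f(y)$, where $\Phi(\Delta t) = \phi/(1 + m\phi)$ as in Remark \ref{Remark3.4new}. Differentiating gives $dg(y^*) = I + \Phi\, J_C(y^*)$, so that if $\lambda \in \sigma(J_C(y^*))$, the corresponding eigenvalue of $dg(y^*)$ is $\mu = 1 + \Phi\lambda$. By Theorem \ref{theorem2.2}, the fixed point $y^*$ is locally asymptotically stable for the scheme provided $\abs{1 + \Phi\lambda} < 1$ for every such $\lambda$ and for all $\Delta t > 0$.

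Next, I would translate $\abs{1 + \Phi\lambda} < 1$ into an explicit inequality. Expanding,
\begin{equation*}
\abs{1 + \Phi\lambda}^2 = 1 + 2\Phi\, Re\lambda + \Phi^2\abs{\lambda}^2,
\end{equation*}
so, using $\Phi > 0$, the requirement becomes $\Phi < -2\,Re\lambda/\abs{\lambda}^2$, whose right-hand side is positive precisely because $Re\lambda < 0$ for every $\lambda \in \mathcal{F}$. The crux is now a uniform bound on $\Phi$: since $\Phi(\Delta t) = \phi/(1 + m\phi)$ is increasing in $\phi$ and satisfies $\Phi < 1/m$ for all finite $\phi > 0$ whenever $m > 0$, one obtains a single step-size-independent ceiling. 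The hypothesis $m \geq m_S$ forces $m > 0$ (as $m_S > 0$) and, by the definition of $m_S$ in \eqref{eq:16a}, $m \geq -\abs{\lambda}^2/(2\,Re\lambda)$ for each $\lambda \in \mathcal{F}$; taking reciprocals of these positive quantities yields $1/m \leq -2\,Re\lambda/\abs{\lambda}^2$. Chaining the two bounds gives $\Phi < 1/m \leq -2\,Re\lambda/\abs{\lambda}^2$, which is exactly the inequality needed.

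The main obstacle, and the step requiring the most care, is the uniformity in $\Delta t$: the whole point is that the one fixed bound $\Phi < 1/m$ must simultaneously dominate the stability threshold for every eigenvalue of every stable equilibrium, which is why $m_S$ is defined as a maximum over the union $\mathcal{F}$ of all relevant spectra. Once these estimates are assembled, invoking Theorem \ref{theorem2.2} gives $\abs{\mu} < 1$ for all eigenvalues of $dg(y^*)$ and all $\Delta t > 0$, establishing that the scheme \eqref{eq:16} preserves the local asymptotic stability of each $y^* \in \mathcal{E}_C^S$.
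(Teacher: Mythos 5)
Your core computation for asymptotically stable equilibria is correct and is essentially the paper's own argument: both proofs rest on the linearization identity $J_D(y^*) = I + \Phi\, J_C(y^*)$ with $\Phi = \phi/(1+m\phi)$, the eigenvalue correspondence $\mu = 1 + \Phi\lambda$, and the expansion $\abs{\mu}^2 = 1 + 2\Phi\,Re\lambda + \Phi^2\abs{\lambda}^2$. Your packaging of the final estimate is in fact slightly cleaner than the paper's: you observe once and for all that $\Phi(\Delta t) < 1/m$ for every step size (since $\Phi$ is increasing in $\phi$ with supremum $1/m$), and then chain $\Phi < 1/m \leq -2\,Re\lambda/\abs{\lambda}^2$ using the definition of $m_S$; the paper instead rearranges $\abs{\mu}^2<1$ into $(2m\,Re\lambda + \abs{\lambda}^2)\phi < -2\,Re\lambda$ and notes the left side is nonpositive when $m \geq -\abs{\lambda}^2/(2\,Re\lambda)$. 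These are the same inequality read in two ways.

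However, there is a genuine gap relative to what the theorem (as the paper understands it, and as its proof establishes) asserts. ``Preserving the LAS'' here means dynamic consistency with respect to local stability under condition \textbf{(C2)}: every hyperbolic equilibrium must keep its stability type, so not only must asymptotically stable equilibria of \eqref{eq:1} remain asymptotically stable fixed points of \eqref{eq:16}, but unstable equilibria must remain unstable. Your proof only treats $y^* \in \mathcal{E}_C^S$ and never rules out the opposite failure mode: that the scheme could \emph{stabilize} an equilibrium that is unstable for the continuous system, producing spurious asymptotic behavior --- precisely the pathology (alongside spurious fixed points and numerical instabilities) that NSFD schemes are designed to exclude. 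The paper's proof devotes its Case 1 to this: if $y^*$ is unstable, Theorem \ref{theorem1} gives some $\lambda_i \in \sigma(J_C(y^*))$ with $Re\lambda_i > 0$, and then the same formula you derived yields
\begin{equation*}
\abs{\mu_i}^2 = 1 + 2\Phi\,Re\lambda_i + \Phi^2\abs{\lambda_i}^2 > 1,
\end{equation*}
since $\Phi > 0$ (which holds because $m \geq m_S > 0$), so $y^*$ is unstable for \eqref{eq:16} by Theorem \ref{theorem2.2}. The omission is easily repaired --- the one-line argument above completes your proof --- but as written your proposal establishes only half of the dynamic-consistency claim.
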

\begin{proof}
Let $y^*$ be any equilibrium point of the continuous system \eqref{eq:1}. Let us denote by ${J}_C(y^*)$ and ${J}_D(y^*)$ the Jacobian matrices of the systems \eqref{eq:1} and \eqref{eq:16} evaluating at $y^*$, respectively. Then, it follows from the equation \eqref{eq:18} that
\begin{equation}\label{eq:20}
{J}_D = I +  \dfrac{\phi}{1 + m\phi}{J}_C,
\end{equation}
where $I$ is the identity matrix. The relation \eqref{eq:20} implies that
\begin{equation*}\label{eq:21}
\begin{split}
\det(xI - {J}_C) &= \det\bigg[\bigg(x + \dfrac{1 + m\phi}{\phi}\bigg)I - \frac{1 + m\phi}{\phi}J_D\bigg]\\
 &= \bigg(\dfrac{1 + m\phi}{\phi}\bigg)^n\det\bigg[\bigg(1 + \dfrac{\phi x}{1 + m\phi}\bigg)I - {J}_D\bigg].
\end{split}
\end{equation*}
Hence, $\lambda$ is an eigenvalue of ${J}_C$ if and only if $\mu = 1 + \dfrac{\phi}{1 + m\phi}\lambda$ is an eigenvalue of ${J}_D$. This implies that
\begin{equation}\label{eq:22}
\begin{split}
|\mu_i|^2 &= \bigg|1 + \dfrac{\phi}{1 + m\phi}\lambda_i\bigg|^2 = \bigg(1 + \dfrac{\phi}{1 + m\phi}Re\lambda_i\bigg)^2 + \bigg(\dfrac{\phi}{1 + m\phi}Im\lambda_i\bigg)^2\\
&= 1 + \dfrac{2\phi}{1 + m\phi}Re\lambda_i + \bigg(\dfrac{\phi}{1 + m\phi}\bigg)^2|\lambda_i|^2 .
\end{split}
\end{equation}
We now consider the following two cases of the stability of $y^*$.\\
\textbf{Case 1.} $y^*$ is an unstable equilibrium point of the system \eqref{eq:1}. In this case, it follows from Theorem \ref{theorem1} that there exists $\lambda_i \in \sigma({J}_C(y^*))$ such that $Re \lambda_i > 0$. Then, we infer from \eqref{eq:22} that the corresponding eigenvalue $\mu_i$ of ${J}_D(y^*)$ satisfies
{
\begin{equation*}
|\mu_i|^2 =  1 + \dfrac{2\phi}{1 + m\phi}Re\lambda_i + \bigg(\dfrac{\phi}{1 + m\phi}\bigg)^2|\lambda_i|^2 > 1,
\end{equation*}
}
which implies that $y^*$ is an unstable equilibrium point of the system \eqref{eq:16} (see Theorem \ref{theorem2.2}).\\
\textbf{Case 2.} $y^*$ is an asymptotically stable equilibrium point of the system \eqref{eq:1}. In this case, $Re\lambda < 0$ for all $\lambda \in \sigma({J}_C(y^*))$. By using \eqref{eq:22}, we obtain $|\mu|^2 < 1$ if and only if
%%%
\begin{equation*}
\big(2mRe\lambda + |\lambda|^2\big)\phi < -2Re\lambda.
\end{equation*}
This condition is always satisfied if $m \geq -{|\lambda|^2}/{(2Re\lambda)}$. Hence, we obtain from the condition $m \geq m_s$ and Theorem \ref{theorem2.2} that $y^*$ is an asymptotically stable equilibrium point of the NSFD scheme \eqref{eq:16}.\par
Finally, the proof is completed by combining Cases $1$ and $2$.
\end{proof}
%%%
{
\begin{remark}
In many cases, it is not easy to obtain conditions for NSFD schemes to be dynamically consistent with respect to the asymptotic stability of dynamical systems (see, for instance, \cite{Cresson, DangHoang1, DangHoang3, DangHoang5, Dimitrov3, Hoang3, Hoang5, Wood0}). However, we can obtain the stability threshold $m_S$ of the NSFD scheme \eqref{eq:16} by simple calculations or a simple numerical algorithm.
\end{remark}
}
\subsection{Conservation laws}\label{sub4.3}
Assume that the system \eqref{eq:1} satisfies conservation laws. We show that the NSFD scheme \eqref{eq:16} has the ability to preserve this property.
\begin{lemma}\label{lemma4.6}
The NSFD scheme \eqref{eq:16} preserves the DCL for all finite step sizes.
\end{lemma}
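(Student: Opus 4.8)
The plan is to reduce the discrete conservation statement to the continuous one by summing the $n$ scalar equations of the explicit form \eqref{eq:18}. First I would record precisely what the DCL means at the level of the vector field. Along any solution of \eqref{eq:1} we have $\dfrac{dP_T}{dt} = \sum_{i=1}^{n}\dfrac{dy_i}{dt} = \sum_{i=1}^{n}f_i(y)$, and since the initial data $y_0$ may be chosen freely in the domain, the trajectories fill a neighbourhood of every admissible state. Hence the hypothesis $\dfrac{dP_T}{dt}=0$ is equivalent to the pointwise identity $\sum_{i=1}^{n}f_i(y)=0$ for all $y$ in the domain; writing $\mathbf{1}=(1,\ldots,1)^{\top}$, this is $\mathbf{1}^{\top}f(y)=0$.

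Next I would apply $\mathbf{1}^{\top}$ to the update \eqref{eq:18}, equivalently summing its $n$ components. Because the factor $\dfrac{\phi}{1+m\phi}$ is a scalar independent of the component index, it pulls out of the sum, yielding
\[
P_T(y_{k+1}) \;=\; \mathbf{1}^{\top}y_{k+1} \;=\; \mathbf{1}^{\top}y_k + \frac{\phi}{1+m\phi}\,\mathbf{1}^{\top}f(y_k) \;=\; P_T(y_k),
\]
where the last equality uses the identity $\mathbf{1}^{\top}f(y_k)=0$ established in the first step. Since this holds for every index $k$ and every value of $\Delta t$, the quantity $P_T(y_k)$ is constant along the discrete orbit, which is exactly the DCL for \eqref{eq:16}.

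I do not expect a genuine analytic obstacle: the argument is purely algebraic and works for any denominator function $\phi$ and any admissible value of $m$, so no restriction on the step size or on the parameter $m$ is required (in contrast to the stability threshold $m_S$ of Theorem \ref{theorem4.1}). The only point that needs a little care is the logical direction in the first step — the DCL must be read as an identity for $f$ on the whole state space rather than merely along one trajectory, because the discrete iterates $y_k$ are not assumed to lie on any continuous solution. Once that is in place, the linearity of both the summation $\mathbf{1}^{\top}$ and the NSFD update does all the work.
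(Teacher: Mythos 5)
Your proposal is correct and follows essentially the same route as the paper's proof: sum the $n$ components of the explicit form \eqref{eq:18}, use the identity $\sum_{i=1}^n f_i(y)=0$ implied by the DCL, and conclude $P_{T,k+1}=P_{T,k}$ for every $k$ and every step size. Your added remark that the DCL must be read as a pointwise identity $\mathbf{1}^{\top}f(y)=0$ on the state space (not merely along one trajectory, since the iterates $y_k$ need not lie on a continuous solution) is a point the paper glosses over, and it tightens the argument without changing its substance.
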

\begin{proof}
Assume that the system \eqref{eq:1} satisfies a DCL, that is the total population satisfies
\begin{equation*}
\dfrac{dP_T}{dt} = \dfrac{dy_1}{dt} + \dfrac{dy_2}{dt} + \ldots + \dfrac{dy_n}{dt} = 0,
\end{equation*}
which implies that $\sum_{i = 1}^ny_i(t)$ is constant, and hence, $\sum_{i = 1}^nf_i(y(t)) = 0$ for all $t \geq 0$. By using \eqref{eq:18} we obtain
\begin{equation*}
P_{T, k + 1} := \sum_{i = 1}^ny_{i, k+1} = \sum_{i = 1}^ny_{i, k} + \dfrac{\phi}{1 + m\phi}\sum_{i = 1}^nf_i(y_k) =  \sum_{i = 1}^ny_{i, k+1} := P_{T,k}.
\end{equation*}
This implies that the DCL is preserved automatically by the NSFD scheme \eqref{eq:16}. The proof is complete.
\end{proof}
Next, assume that a GCL exists for the system \eqref{eq:1}, that is
\begin{equation}\label{eq:24}
\dfrac{dP_T}{dt} = a_1 - b_1 P_T, \quad a_1, b_1 > 0.
\end{equation}
Note that \eqref{eq:24} has an exact finite difference scheme of the form (see \cite{Mickens6})
\begin{equation}\label{eq:25}
\dfrac{P_{T, k + 1} - P_{T, k}}{\phi^*(\Delta t)} = a_1 - b_1P_{T, k}, \quad \phi^*(\Delta t) = \dfrac{1 - e^{-b_1\Delta t}}{b_1}.
\end{equation}
Applying the NSFD scheme \eqref{eq:16} we obtain
\begin{equation}\label{eq:26}
P_{T, k + 1} = P_{T, k} + \dfrac{\phi}{1 + m\phi}(a_1 - b_1P_{T, k}).
\end{equation}
\begin{lemma}\label{lemma4.7}
The following assertions hold for the difference equation \eqref{eq:26}:
\begin{enumerate}
\item the sequence $\{P_{T, k}\}_k$ defined in \eqref{eq:26} monotonically converges to ${a_1}/{b_1}$ as $k \to \infty$, provided that
\begin{equation}\label{eq:27a}
m \geq m_{GCL} := b_1;
\end{equation}
\item The scheme \eqref{eq:26} is an exact finite difference scheme for \eqref{eq:24} if
\begin{equation}\label{eq:27}
\phi(\Delta t) = \phi_E(\Delta t) := \dfrac{\phi^*(\Delta t)}{1 - m\phi^*(\Delta t)},
\end{equation}
where $\phi^*$ is given in \eqref{eq:25}.
\end{enumerate}
\end{lemma}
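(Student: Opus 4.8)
The plan is to treat the two assertions separately, since each reduces to an elementary analysis of the scalar affine recurrence \eqref{eq:26}. For the first assertion I would set $\Phi := \phi/(1+m\phi)$ (the renormalized denominator of Remark \ref{Remark3.4new}) and rewrite \eqref{eq:26} as the linear first-order recurrence
\begin{equation*}
P_{T, k+1} = (1 - b_1\Phi)P_{T,k} + a_1\Phi .
\end{equation*}
Its unique fixed point is $P^* = a_1/b_1$, obtained by setting $P_{T,k+1}=P_{T,k}$. Subtracting $a_1/b_1$ and introducing the error $e_k := P_{T,k} - a_1/b_1$ collapses the dynamics to the purely geometric relation $e_{k+1} = (1-b_1\Phi)e_k$, whence $e_k = (1-b_1\Phi)^k e_0$. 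Monotone convergence of $P_{T,k}$ to $a_1/b_1$ is then equivalent to the contraction ratio lying in $[0,1)$, and I would settle this by the direct computation
\begin{equation*}
1 - b_1\Phi = \frac{1 + (m - b_1)\phi}{1 + m\phi},
\end{equation*}
which, because $\phi > 0$ and $m \geq b_1 > 0$, lies strictly between $0$ and $1$: the numerator is positive, and it is strictly smaller than the denominator since $-b_1\phi < 0$. This yields the claimed monotone convergence.

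For the second assertion I would exploit the same observation from Remark \ref{Remark3.4new}, namely that \eqref{eq:26} can be recast as
\begin{equation*}
\frac{P_{T, k+1} - P_{T, k}}{\Phi(\Delta t)} = a_1 - b_1 P_{T,k}, \qquad \Phi(\Delta t) = \frac{\phi(\Delta t)}{1 + m\phi(\Delta t)},
\end{equation*}
so that \eqref{eq:26} is precisely the scheme \eqref{eq:25}, but with the exact denominator $\phi^*$ replaced by $\Phi$. Since \eqref{eq:25} is already known to be the exact scheme for \eqref{eq:24}, it suffices to show that the choice $\phi = \phi_E$ forces $\Phi = \phi^*$. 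I would verify this by substituting $\phi_E = \phi^*/(1 - m\phi^*)$ and computing $1 + m\phi_E = 1/(1 - m\phi^*)$, so that $\Phi = \phi_E/(1+m\phi_E) = \phi^*$. Hence \eqref{eq:26} coincides identically with the exact scheme \eqref{eq:25}, and exactness follows at once.

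The computations in both parts are routine; the only genuine care needed is in the bookkeeping of signs and in admissibility. In the first part I must confirm that $m \geq b_1$ is exactly the hypothesis keeping the ratio in $[0,1)$ rather than in $(-1,0)$, which would instead give oscillatory, non-monotone convergence. In the second part the subtle point—and what I expect to be the main thing to watch—is that $\phi_E$ must remain a legitimate nonstandard denominator, i.e. positive with $\phi_E = \Delta t + \mathcal{O}(\Delta t^2)$; this requires $1 - m\phi^* > 0$. Since $\phi^*(\Delta t) = (1 - e^{-b_1\Delta t})/b_1 \in (0, 1/b_1)$ for every $\Delta t > 0$, the inequality $m\phi^* < 1$ holds for all step sizes whenever $m \leq b_1$ and, for larger $m$, under a mild restriction on $\Delta t$; at the borderline $m = b_1$ one even finds the clean closed form $\phi_E = (e^{b_1\Delta t}-1)/b_1$. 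This admissibility check is a one-line verification rather than a deep obstacle, so the proof should go through cleanly.
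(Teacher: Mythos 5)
Your proposal is correct and follows essentially the same route as the paper: both parts reduce \eqref{eq:26} to the affine recurrence with contraction factor $1 - b_1\phi/(1+m\phi) = \bigl(1+(m-b_1)\phi\bigr)/(1+m\phi)$, which the hypothesis $m \geq b_1$ places in $(0,1)$, and part (2) in both cases amounts to matching the renormalized denominator $\phi/(1+m\phi)$ with $\phi^*$. The only cosmetic difference is that you solve the recurrence via the error variable $e_k = P_{T,k} - a_1/b_1$ (which yields convergence and monotonicity in one stroke), whereas the paper unrolls the recursion into a geometric sum and deduces monotonicity separately from the nonnegativity of $\partial P_{T,k+1}/\partial P_{T,k}$.
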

\begin{proof}
\textbf{Proof of Part (1).} It follows from \eqref{eq:26} that
\begin{equation*}
\begin{split}
P_{T, k + 1} &= P_{T, k} + \dfrac{\phi}{1 + m\phi}(a_1 - b_1P_{T, k}) = \dfrac{a_1\phi}{1 + m\phi} +
\bigg(1 - \dfrac{b_1\phi}{1 + m\phi}\bigg)P_{T, k}\\
%%%
&= \dfrac{a_1\phi}{1 + m\phi} + \bigg(1 - \dfrac{b_1\phi}{1 + m\phi}\bigg)\Bigg[\dfrac{a_1\phi}{1 + m\phi} + \bigg(1 - \dfrac{b_1\phi}{1 + m\phi}\bigg)P_{T, k-1}\Bigg]\\
%%%
&= \dfrac{a_1\phi}{1 + m\phi} + \dfrac{a_1\phi}{1 + m\phi} \bigg(1 - \dfrac{b_1\phi}{1 + m\phi}\bigg) + \bigg(1 - \dfrac{b_1\phi}{1 + m\phi}\bigg)^2P_{T, k-1}.
\end{split}
\end{equation*}
By continuing the above process, we have
\begin{equation*}
P_{T, k + 1} = \dfrac{a_1\phi}{1 + m\phi}\mathlarger{\mathlarger\sum_{i = 0}^k}\bigg(1 - \dfrac{b_1\phi}{1 + m\phi}\bigg)^i + \bigg(1 - \dfrac{b_1\phi}{1 + m\phi}\bigg)^kP_{T, 0}.
\end{equation*}
Since $m \geq b_1$, $1 - \dfrac{b_1\phi}{1 + m\phi} \in (-1, 1)$. This implies that
\begin{equation}
\lim_{t \to \infty}P_{T, k}=  \dfrac{a_1\phi}{1 + m\phi}\bigg(\dfrac{b_1\phi}{1 + m\phi}\bigg)^{-1} = \dfrac{a_1}{b_1}.
\end{equation}
On the other hand, we infer from $m \geq b_1$ that
\begin{equation*}
\dfrac{\partial P_{T, k+1}}{\partial P_{T, k}} = 1 + \dfrac{\phi}{1 +  m\phi}(-b_1) \geq 0,
\end{equation*}
which implies the monotonocity of $\{P_{T, k}\}_k$. The proof is completed.\\
%%%
\textbf{Proof of Part (2).} The exact scheme \eqref{eq:25} can be written in the form
\begin{equation}\label{eq:28}
P_{T, k+1} = P_{T, k} + \phi^*(a_1 - b_1P_{T, k}).
\end{equation}
We infer from \eqref{eq:28} and \eqref{eq:26} that \eqref{eq:26} is an exact scheme for \eqref{eq:24} if
\begin{equation*}
\phi^* = \dfrac{\phi}{1 + m\phi}.
\end{equation*}
This condition is equivalent to \eqref{eq:27}. The proof is complete.
\end{proof}
%%%%%%%%%%%%%%%%%%%%%%%%%%%%%%%%%%%%%%%
From Lemma \ref{lemma4.7} we have:
\begin{theorem}\label{theorem4.8}
The NSFD scheme \eqref{eq:16} satisfies the GCL if one of the following conditions holds:
\begin{enumerate}
\item $m \geq m_{GCL} := b_1$.
\item $\phi(\Delta t) = \phi_E(\Delta t)$, where $\phi_E$ is given in \eqref{eq:27}.
\end{enumerate}
\end{theorem}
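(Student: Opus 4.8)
The plan is to recognize that Theorem \ref{theorem4.8} is a direct repackaging of Lemma \ref{lemma4.7}, whose two parts correspond precisely to the two sufficient conditions in the theorem. First I would make explicit what it means for the discrete model to \emph{satisfy the GCL}: since the continuous law \eqref{eq:24} forces the total population $P_T(t)$ to converge monotonically to $a_1/b_1$, dynamic consistency demands that the discrete total $P_{T,k}$ governed by \eqref{eq:26} inherit exactly this behavior, i.e.\ that $\{P_{T,k}\}_k$ be monotone with $\lim_{k\to\infty}P_{T,k}=a_1/b_1$.

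For the first condition, I would simply invoke Lemma \ref{lemma4.7}, Part~(1). Under the hypothesis $m \geq m_{GCL} = b_1$, that lemma already furnishes both required features: the closed-form geometric-series expression yields $\lim_{k\to\infty}P_{T,k}=a_1/b_1$, and the nonnegativity of $\partial P_{T,k+1}/\partial P_{T,k}=1-\tfrac{b_1\phi}{1+m\phi}$ (which lies in $[0,1)$ precisely because $m\geq b_1$) delivers monotonicity. These are exactly the two properties characterizing the GCL, so no additional computation is needed.

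For the second condition, I would appeal to Lemma \ref{lemma4.7}, Part~(2). The choice $\phi=\phi_E$ from \eqref{eq:27} makes \eqref{eq:26} coincide with the exact finite difference scheme \eqref{eq:25} for \eqref{eq:24}. An exact scheme reproduces the continuous solution $P_T(t_k)$ at every grid point, so every qualitative feature of the continuous GCL---in particular the monotone convergence of $P_T$ to $a_1/b_1$---transfers verbatim to $P_{T,k}$. Hence the GCL is preserved under this hypothesis as well.

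There is no genuine obstacle here, since all of the analytical content was already established in Lemma \ref{lemma4.7}; the theorem merely assembles its two parts. The only point requiring mild care is to state explicitly what ``satisfies the GCL'' means for the discrete model, and to verify that each part of Lemma \ref{lemma4.7} supplies exactly that property under its respective hypothesis, so that the two conditions are genuinely alternative (either-or) sufficient conditions rather than joint requirements.
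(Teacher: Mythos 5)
Your proposal is correct and follows exactly the paper's route: the paper states Theorem \ref{theorem4.8} as an immediate consequence of Lemma \ref{lemma4.7} (literally ``From Lemma \ref{lemma4.7} we have:''), with Part~(1) giving the condition $m \geq m_{GCL} = b_1$ and Part~(2) giving the exact-scheme condition $\phi = \phi_E$. Your additional care in spelling out what ``satisfies the GCL'' means for the discrete model (monotone convergence of $P_{T,k}$ to $a_1/b_1$) is a sound elaboration of a step the paper leaves implicit, but it does not change the argument.
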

\begin{remark}
The function $\phi_E$ given in \eqref{eq:27} satisfies $\phi(\Delta t) = \Delta t + \mathcal{O}(\Delta t^2)$ and is defined  for all $\Delta t > 0$ if $b_1 \geq m$. When $b_1 < m$, it is defined only if $\Delta t \ne -\dfrac{\ln(m - b_1)}{b_1}$.
\end{remark}
\begin{remark}
The sub-conservation law will be satisfied if the DCL and GCL are satisfied.
\end{remark}
\subsection{Dynamics consistency}\label{sub4.4}
For the sake of convenience, we summarize the results constructed in Subsections \ref{sub4.1}-\ref{sub4.4} in the following theorem.
\begin{theorem}[Dynamic consistency]\label{theorem4.10}
The NSFD scheme \eqref{eq:16} is
\begin{itemize}
\item dynamically consistent with the positivity of the model \eqref{eq:1} if the condition \eqref{eq:12} is satisfied;
%\begin{equation}\label{eq:NSFD1}
%m \geq m_P := \max\{\alpha,\,\, 0\};
%\end{equation}
%%%
\item dynamically consistent with the LAS of the model \eqref{eq:1} if the condition \eqref{eq:16c} is satisfied;
%\begin{equation}\label{eq:NSFD2}
%m \geq m_S;
%\end{equation}
%%
\item dynamically consistent with the DCL of the model \eqref{eq:1}  for all the values of $m$;
%\begin{equation*}
%-\infty < m < \infty.
%\end{equation*}
%
\item dynamically consistent the GCL the model \eqref{eq:1} if
\begin{equation}\label{eq:NSFD4}
m \geq m_{GCL} := b_1
\end{equation}
or
\begin{equation}\label{eq:NSFD5}
\phi(\Delta t) = \phi_E(\Delta t),
\end{equation}
\end{itemize}
where $\phi_E$ is given in \eqref{eq:27}.
\end{theorem}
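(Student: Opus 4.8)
The plan is to recognize that Theorem \ref{theorem4.10} is purely a consolidation of the results established in the preceding subsections, so the proof should proceed by matching each of its four assertions to the corresponding lemma or theorem already proved, together with the definition of dynamic consistency. The central observation is that, by that definition, it suffices to verify that whenever the continuous model \eqref{eq:1} possesses a given property $\mathcal{P}$, the discrete scheme \eqref{eq:16} also possesses $\mathcal{P}$ under the stated hypothesis on $m$ (or on $\phi$); thus no new analysis is required, only a careful invocation of the building blocks.

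First I would dispatch the positivity claim: by Lemma \ref{lemma4.1}, the scheme \eqref{eq:16} preserves positivity for every step size precisely when $m \geq m_P = \max\{\alpha, 0\}$, which is condition \eqref{eq:12}. Next, the asymptotic-stability claim follows verbatim from Theorem \ref{theorem4.1}: under \eqref{eq:16c}, i.e. $m \geq m_S$, every asymptotically stable equilibrium of \eqref{eq:1} remains asymptotically stable for \eqref{eq:16} and every unstable one remains unstable; moreover, by Lemma \ref{lemma4.3} the two schemes share the same equilibrium set, so no spurious fixed points can disturb the local stability structure and the full picture is faithfully reproduced. The direct conservation law is then handled by Lemma \ref{lemma4.6}, which shows that $P_{T,k}$ is conserved by \eqref{eq:16} for all finite step sizes and, in particular, for every value of $m$. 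Finally, the generalized conservation law is exactly the content of Theorem \ref{theorem4.8}, supplying the two alternative sufficient conditions \eqref{eq:NSFD4} and \eqref{eq:NSFD5}.

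Since each ingredient is already established, there is no genuine analytic obstacle; the only step requiring care is bookkeeping, namely ensuring that the hypotheses quoted in the summary (\eqref{eq:12}, \eqref{eq:16c}, \eqref{eq:NSFD4}, \eqref{eq:NSFD5}) are reproduced exactly as in the source results and that the definition of dynamic consistency is explicitly invoked, so that ``property-preservation'' is correctly identified with ``dynamic consistency.'' As a consistency check I would also note that if $m$ is chosen large enough to satisfy $m \geq m_P$, $m \geq m_S$ and $m \geq m_{GCL}$ simultaneously, then the single scheme \eqref{eq:16} is dynamically consistent with respect to positivity, LAS and all three classes of conservation laws at once, which is the qualitative payoff the theorem is meant to record.
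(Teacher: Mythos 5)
Your proposal is correct and matches the paper's own treatment: the paper presents Theorem \ref{theorem4.10} explicitly as a summary of Subsections \ref{sub4.1}--\ref{sub4.3}, so its ``proof'' is exactly the invocation of Lemma \ref{lemma4.1} (positivity), Theorem \ref{theorem4.1} (LAS), Lemma \ref{lemma4.6} (DCL), and Theorem \ref{theorem4.8} (GCL) under the quoted hypotheses, which is precisely what you do. Your added remarks on Lemma \ref{lemma4.3} and on choosing $m$ large enough to satisfy all thresholds simultaneously are consistent with the paper's surrounding discussion and require no further justification.
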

\begin{remark}
\begin{itemize}
\item The positivity, LAS and conservation laws are independent of each other. So, they may not occur simultaneously. Therefore, the threshold parameter $m$ should be selected depending on properties that the model \eqref{eq:1} can have.
\item The determination of $m_P$ in \eqref{eq:12} and $m_{GCL}$ in \eqref{eq:NSFD4} is trivial while it is also easy to write a numerical algorithm to compute $m_S$.
{
\item Since the conditions for the NSFD scheme \eqref{eq:16} to be dynamically consistent with respect to the positiivity, LAS and conservation laws of the model \eqref{eq:1} are set to the parameter $m$, we can choose the denominator function $\phi(\Delta t)$ to be $\Delta t$. This shows the role of the parameter $m$ for the dynamic consistency of the NSFD scheme \eqref{eq:16}.
}
\end{itemize}
\end{remark}
\subsection{A note on the nonstandard Euler scheme}\label{sub4.5}
{As emphasized in Remark \ref{Remark3.4new}, the NSFD scheme \eqref{eq:16} can be considered as a nonstandard Euler method. For the sake of convenience, we now consider a special case of the parameter $m$, namely, $m = 0$. In this case, the NSFD scheme \eqref{eq:16} becomes
\begin{equation}\label{eq:16.1}
y_{k + 1} = y_k + \phi(\Delta t) f(y_k).
\end{equation}
This can be considered as a "weakly" NSFD scheme \cite{Dimitrov1}.\par
%%%
Because of the absence of parameter $m$, conditions for the dynamic consistency of the scheme \eqref{eq:16.1} will be set to the denominator function $\phi$. Below, we will give conditions for the scheme \eqref{eq:16.1} to be dynamically consistent with the model \eqref{eq:1}.}\par
%%%
We infer from \eqref{eq:16.1} that $y_{k + 1} \geq 0$ for all $\phi > 0$ if $\alpha \leq 0$. In this case, the nonstandard Euler scheme is unconditionally positive. Otherwise, if $\alpha > 0$, then it follows from \eqref{eq:2} that
\begin{equation*}
y_{k + 1} = y_k + \phi f(y_k) \geq y_k + (-\alpha y_k) = (1 - \phi\alpha)y_k.
\end{equation*}
Consequently, $y_{k + 1} \geq 0$ if $\phi < (\alpha)^{-1}$. So,  we have the following result on the positivity of the nonstandard Euler scheme.
%%%
\begin{lemma}\label{lemma4.11}
The nonstandard Euler scheme \eqref{eq:16.1} preserves the positivity of the model \eqref{eq:1} for all the values of the step size $\Delta t$ whenever the denominator function satisfies the following condition for all $\Delta t > 0$
\begin{equation}\label{eq:NE1}
\phi(\Delta t) < \phi_P :=
\begin{cases}
&\infty, \,\,\,\quad\qquad \alpha \leq 0,\\
&(\alpha)^{-1}, \qquad \alpha > 0.
\end{cases}
\end{equation}
\end{lemma}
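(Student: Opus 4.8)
The plan is to prove the claim by induction on the iteration index $k$, exploiting condition \textbf{(C1)} to bound $f(y_k)$ from below entry-wise. The base case is immediate: by hypothesis $y_0 \in \mathbb{R}^n_+$, so $y_0 \geq 0$. For the inductive step I would assume $y_k \geq 0$ and show that the prescribed bound on $\phi(\Delta t)$ forces $y_{k+1} \geq 0$, using the explicit form $y_{k+1} = y_k + \phi(\Delta t) f(y_k)$ from \eqref{eq:16.1}.

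The argument then splits according to the sign of $\alpha$. When $\alpha \leq 0$, condition \textbf{(C1)}, i.e. $f(y_k) + \alpha y_k \geq 0$, rewrites as $f(y_k) \geq -\alpha y_k$; since $-\alpha \geq 0$ and $y_k \geq 0$, the right-hand side is nonnegative, whence $f(y_k) \geq 0$. Consequently
\begin{equation*}
y_{k+1} = y_k + \phi(\Delta t) f(y_k) \geq 0
\end{equation*}
for every $\phi(\Delta t) > 0$, which matches the value $\phi_P = \infty$ recorded in \eqref{eq:NE1}.

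When $\alpha > 0$, I would again start from $f(y_k) \geq -\alpha y_k$, but now the lower bound need not be nonnegative. Substituting into \eqref{eq:16.1} gives
\begin{equation*}
y_{k+1} \geq y_k - \phi(\Delta t)\alpha y_k = \big(1 - \phi(\Delta t)\alpha\big) y_k.
\end{equation*}
Since $y_k \geq 0$ entry-wise, positivity of $y_{k+1}$ is guaranteed as soon as the scalar factor $1 - \phi(\Delta t)\alpha$ is nonnegative, that is whenever $\phi(\Delta t) \leq \alpha^{-1}$; the strict bound $\phi(\Delta t) < \alpha^{-1} = \phi_P$ of \eqref{eq:NE1} therefore suffices. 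Combining the two cases closes the induction.

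I do not anticipate a genuine obstacle here, as the estimate is essentially a one-line consequence of \textbf{(C1)}; the only points demanding care are that all inequalities be read entry-wise and that the case $\alpha \leq 0$ be separated from $\alpha > 0$ so that the sign of the lower bound $-\alpha y_k$ is handled correctly. The contrast with Lemma \ref{lemma4.1} is worth emphasizing: there the nonlocal term $-my_{k+1}$ yielded unconditional positivity, whereas setting $m = 0$ shifts the entire burden of preserving positivity onto the denominator function $\phi$, which is precisely what the restriction \eqref{eq:NE1} encodes.
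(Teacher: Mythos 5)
Your proposal is correct and follows essentially the same route as the paper: both use condition \textbf{(C1)} to bound $f(y_k) \geq -\alpha y_k$, split into the cases $\alpha \leq 0$ (unconditional positivity) and $\alpha > 0$ (the estimate $y_{k+1} \geq (1-\phi\alpha)y_k$), and conclude under the bound $\phi < \alpha^{-1}$. The only difference is cosmetic: you make the induction on $k$ explicit, whereas the paper leaves it implicit in a one-step argument.
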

%%%%%%%%%%%
The following result is a direct consequence of the results presented in \cite{Dimitrov1, Dimitrov2}.
\begin{corollary}\label{lemma4.12}
The nonstandard Euler scheme \eqref{eq:16.1} preserves the LAS of the model \eqref{eq:1} for all the values of the step size $\Delta t$ if the denominator function satisfies the following condition for all $\Delta t > 0$
\begin{equation}\label{eq:NE2}
\phi(\Delta t) < \phi_S := \min_{\lambda \in \mathcal{F}}\bigg\{-\dfrac{2Re\lambda}{|\lambda|^2}\bigg\},
\end{equation}
where the set $\mathcal{F}$ is defined in \eqref{eq:16a}.
\end{corollary}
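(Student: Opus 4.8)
The plan is to observe that the nonstandard Euler scheme \eqref{eq:16.1} is exactly the special case $m = 0$ of the general NSFD scheme \eqref{eq:16}, so the spectral argument already carried out in the proof of Theorem \ref{theorem4.1} specializes directly. First I would fix an arbitrary equilibrium $y^*$ of \eqref{eq:1} and record the Jacobian of the discrete map \eqref{eq:16.1} at $y^*$, namely $J_D = I + \phi\, J_C$ with $J_C = J_C(y^*)$. The spectral correspondence is then immediate: $\lambda \in \sigma(J_C)$ if and only if $\mu = 1 + \phi\lambda \in \sigma(J_D)$, which is precisely the $m = 0$ instance of the relation underlying \eqref{eq:22}.

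Next I would compute the modulus
\begin{equation*}
|\mu|^2 = |1 + \phi\lambda|^2 = \big(1 + \phi\, Re\lambda\big)^2 + \big(\phi\, Im\lambda\big)^2 = 1 + 2\phi\, Re\lambda + \phi^2|\lambda|^2,
\end{equation*}
and then split into the two stability cases exactly as in Theorem \ref{theorem4.1}. In the unstable case, where some $\lambda \in \sigma(J_C(y^*))$ has $Re\lambda > 0$, both $2\phi\, Re\lambda$ and $\phi^2|\lambda|^2$ are positive for every $\phi > 0$, so $|\mu|^2 > 1$ and Theorem \ref{theorem2.2} keeps $y^*$ unstable for the scheme \emph{without any restriction} on $\phi$. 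In the asymptotically stable case, where $Re\lambda < 0$ for every $\lambda \in \sigma(J_C(y^*))$, the requirement $|\mu|^2 < 1$ reduces, after dividing by $\phi > 0$, to $\phi|\lambda|^2 < -2\, Re\lambda$, that is $\phi < -2\, Re\lambda/|\lambda|^2$; the right-hand side is strictly positive precisely because $Re\lambda < 0$.

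To finish, I would impose this bound simultaneously over all eigenvalues of all stable equilibria, i.e. over the set $\mathcal{F}$ defined in \eqref{eq:16a}, and take the infimum to obtain the threshold $\phi_S = \min_{\lambda \in \mathcal{F}}\{-2\, Re\lambda/|\lambda|^2\}$. The hypothesis $\phi(\Delta t) < \phi_S$ for all $\Delta t > 0$ then forces $|\mu|^2 < 1$ for every eigenvalue of every stable equilibrium, so Theorem \ref{theorem2.2} certifies local asymptotic stability of each such $y^*$ for the discrete scheme at every step size. Because condition \textbf{(C2)} makes $\mathcal{E}_C$ finite, the set $\mathcal{F}$ is finite and the minimum is attained at a strictly positive value, so admissible denominator functions genuinely exist. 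I do not expect a real obstacle here, since this is a clean specialization of Theorem \ref{theorem4.1}; the only point needing care is the observation that the instability half requires no constraint on $\phi$, so that $\phi_S$ governs only the stable equilibria and is guaranteed to be a positive, finite quantity.
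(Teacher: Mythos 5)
Your proof is correct, but it takes a genuinely different route from the paper: the paper gives no argument at all for this corollary, stating it only as ``a direct consequence of the results presented in \cite{Dimitrov1, Dimitrov2}'' (the elementary-stability theory of Dimitrov and Kojouharov for NSFD schemes built on the $\theta$- and Runge--Kutta methods). What you do instead is specialize the spectral argument of Theorem \ref{theorem4.1} to $m = 0$: the Jacobian relation $J_D = I + \phi J_C$, the eigenvalue correspondence $\mu = 1 + \phi\lambda$, the modulus identity $|\mu|^2 = 1 + 2\phi\, Re\lambda + \phi^2|\lambda|^2$ (the $m=0$ instance of \eqref{eq:22}), and the two-case analysis concluded via Theorem \ref{theorem2.2}. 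This is exactly the computation underlying the cited results, so nothing is lost; what your version buys is self-containedness: it shows explicitly where the threshold $\phi_S$ comes from, makes clear that the instability half of the argument requires no restriction on $\phi$ whatsoever (so $\phi_S$ only governs the stable equilibria), and invokes the finiteness of $\mathcal{F}$, guaranteed by \textbf{(C2)}, to ensure the minimum in \eqref{eq:NE2} is attained and strictly positive --- a point the paper leaves implicit. The paper's citation-based route is shorter and situates the corollary within the existing literature, while your derivation is arguably more informative in context, since the message of Subsection \ref{sub4.5} is precisely that setting $m = 0$ trades the parameter $m$ for a constraint on the denominator function, and your computation exhibits that trade-off directly.
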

%%%
%%%%%%%%%%%
It is easily seen that the nonstandard Euler scheme always preserves the DCL. On the other hand, when applying the nonstandard Euler scheme for systems satisfying the GLC, we obtain
\begin{equation}\label{eq:30}
\dfrac{P_{T, k + 1} - P_{T, k}}{\varphi(\Delta t)} = a_1 - b_1P_{T, k}.
\end{equation}
Hence, the GLC is preserved if
\begin{equation}\label{eq:31a}
\phi < \phi_{GCL} = (b_1)^{-1}.
\end{equation}
Furthermore, the scheme \eqref{eq:30} becomes an exact scheme for \eqref{eq:24} if 
\begin{equation}\label{eq:31}
\phi(\Delta t) = \dfrac{1 - e^{-b_1\Delta t}}{b_1}.
\end{equation}
%%%%%%%%%%
Hence, we obtain the following assertion.
\begin{corollary}
The nonstandard Euler scheme \eqref{eq:16.1} is always dynamically consistent with respect to the DCL and is dynamically consistent with respect to the GCL if the condition \eqref{eq:31a} or the condition \eqref{eq:31} holds.
\end{corollary}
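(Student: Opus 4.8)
The plan is to treat the DCL and GCL claims separately and, in each case, to sum the $n$ component equations of the nonstandard Euler scheme \eqref{eq:16.1} to obtain a scalar difference equation for $P_{T,k}$, from which the asserted behavior can be read off directly. This is simply the $m=0$ specialization of the arguments already used in Lemmas \ref{lemma4.6} and \ref{lemma4.7}, with the role of the parameter $m$ now played solely by the denominator function $\phi$.

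For the DCL, I would start from the hypothesis $dP_T/dt = \sum_i f_i(y) = 0$ along solutions, which (as in the proof of Lemma \ref{lemma4.6}) forces $\sum_{i=1}^{n} f_i(y_k) = 0$ at every discrete state. Summing \eqref{eq:16.1} over the $n$ components then gives
\begin{equation*}
P_{T,k+1} = \sum_{i=1}^{n} y_{i,k} + \phi \sum_{i=1}^{n} f_i(y_k) = P_{T,k},
\end{equation*}
so $P_{T,k}$ is constant for every $\phi > 0$. This establishes the ``always'' part of the statement, with no restriction on the step size.

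For the GCL, summing \eqref{eq:16.1} under $dP_T/dt = a_1 - b_1 P_T$ yields the scalar recurrence \eqref{eq:30}, which I would rewrite as $P_{T,k+1} = a_1\phi + (1 - b_1\phi)P_{T,k}$, a linear iteration with unique fixed point $a_1/b_1$. Under condition \eqref{eq:31a}, namely $\phi < (b_1)^{-1}$, the multiplier satisfies $1 - b_1\phi \in (0,1)$; iterating the recurrence (exactly as in the telescoping computation of Lemma \ref{lemma4.7}, Part (1)) then shows that $P_{T,k}$ converges monotonically to $a_1/b_1$, so the GCL is preserved. Under condition \eqref{eq:31}, where $\phi = (1 - e^{-b_1\Delta t})/b_1 = \phi^*$, the recurrence \eqref{eq:30} coincides term by term with the exact finite difference scheme \eqref{eq:25} for \eqref{eq:24}; since that scheme reproduces the exact solution of \eqref{eq:24}, which monotonically approaches $a_1/b_1$, the GCL again holds.

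There is essentially no analytical obstacle here: once the component equations are summed, both laws reduce to one-line consequences of a scalar linear difference equation, and the exactness claim is immediate from matching \eqref{eq:30} with \eqref{eq:25}. The only point that requires a little care is verifying that $1 - b_1\phi$ is \emph{strictly positive} (not merely of modulus less than one), so that the convergence is monotone rather than oscillatory; this is precisely what the strict inequality $\phi < (b_1)^{-1}$ in \eqref{eq:31a} guarantees.
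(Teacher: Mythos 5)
Your proof is correct and takes essentially the same route as the paper: the corollary there is read off directly from the preceding derivation of \eqref{eq:30}, \eqref{eq:31a} and \eqref{eq:31}, which is exactly the $m=0$ specialization of Lemmas \ref{lemma4.6} and \ref{lemma4.7} that you carry out by summing the component equations. Your added observation that $1-b_1\phi$ must be \emph{strictly positive} (not merely of modulus less than one) so that the convergence to $a_1/b_1$ is monotone rather than oscillatory is a sound clarification of why \eqref{eq:31a} is the right threshold, but it does not alter the argument.
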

\begin{remark}
\begin{itemize}
\item {By choosing suitable denominator functions, the nonstandard Euler scheme \eqref{eq:16.1} can preserve the positivity, LAS and conservation laws as the NSFD scheme \eqref{eq:16}. Although the schemes \eqref{eq:16} and \eqref{eq:16.1} are all convergent of order $1$, in the next section we will provide a numerical example in which {errors of Scheme \eqref{eq:16} are better than errors  of Scheme \eqref{eq:16.1}}.}
\item The condition \eqref{eq:31} can conflict with the conditions \eqref{eq:NE1} and \eqref{eq:NE2}.
\end{itemize}
\end{remark}
\section{Numerical experiments}\label{sec5}
In this section, we perform some numerical examples to illustrate the theoretical findings and to show advantages of the proposed NSFD scheme.
\begin{example}[Numerical simulation of single-species population models]
{
Consider single-species population models, which describe  the population size changes in the absence of disease and of maturation delay \cite{Cooke}
\begin{equation}\label{eq:5}
\dfrac{dN}{dt} = B(N)N - dN,
\end{equation}
where $N(t)$ stands for the population size at the time $t$, $d > 0$ is the death rate constant, and
$B(N)N$ is a birth rate function with $B(N)$ has the following properties for all $N \in (0, \infty)$\\
(A1) $B(N) > 0$;\\
(A2) $B(N)$ is continuously differentiable with $B'(N) < 0$;\\
(A3) $B(0^+) > d > B(\infty)$.\\
}
The following functions are  examples of birth functions $B(N)$ that are found in biological systems and satisfy (A1)-(A3) (see \cite{Cooke})\\
%%%%%%%%%%%%%%%%%%%
(B1) $B_1(N) = be^{-aN}$, with $a > 0$ and $b > d$;\\
(B2) $B_2(N) = \dfrac{p}{q + N^n}$, with $p, q, n > 0$ and $p > qd$;\\
(B3) $B_3(N) = \dfrac{A}{N} + c$, with $A > 0$, $d > c > 0$.\\
%%%
It is clear that the model \eqref{eq:5} satisfies the condition \textbf{(C1)} with $\alpha = d$. On the other hand, it possesses two equilibrium points, that are $N^0 = 0$ and $N^* = B^{-1}(d)$. Moreover, $N^0$ is always unstable while $N^*$ is always asymptotically stable. Therefore, the condition \textbf{(C2)} also satisfies. If $B(N) = B_3(N)$, then the model \eqref{eq:5} becomes linear and satisfies the GCL.\par
%%%%%%%%%%%%%%
%%
We consider the model \eqref{eq:5} with 
\begin{equation*}
B(N) = B_1(N) = be^{-aN}N - dN, \quad a = d = 1, \quad b = 12
\end{equation*}
as a test problem for numerical schemes. In this case, the model has a unique equilibrium point 
\begin{equation*}
N^* = -{\ln(d/b)}/{a} \approx  2.4849,
\end{equation*}
which is locally asymptotically stable. Note that the conservation laws are absent in this example.\par
For the NSFD scheme \eqref{eq:16}, we compute the parameter $m$ as
\begin{equation*}
m \geq \max\big\{m_P,\,\,m_S\big\} = \max\{1,\,\,1.2425\} = 1.2425.
\end{equation*}
%%%%%%%%%
For the nonstandard Euler scheme, the condition for the denominator function can be computed as
\begin{equation*}
\phi(\Delta t) < \min\{\phi_P,\,\,\phi_S\} = \min\{1,\,\, 0.8049\} = 0.8049
\end{equation*}
Numerical solutions generated by the NSFD scheme \eqref{eq:16} with $m = 2.5$ and $\Delta t \in \{2.0,\, 0.1\}$ and by the nonstandard Euler scheme with $\phi(\Delta t) = ({1 - e^{-2.5\Delta t}})/{2.5}$ and $\Delta t \in \{1,\, 0.1\}$ are depicted in Figure \ref{Fig:1}. In this figure, each blue curve represents a solution corresponding to a specific initial value and the green arrows represents the evolution of the continuous model. We observe from Figure \ref{Fig:1} that the numerical solutions generated by the NSFD model \eqref{eq:16} are positive and stable. Furthermore, the NSFD model replicates the dynamics of the continuous model and its behaviour does not depend on the chosen step sizes. Similarly, the nonstandard Euler scheme \eqref{eq:16.1} also preserves the positivity and stability of the continuous model regardless of the chosen step sizes.\par
%%%%%%%%%%%%%%%%%%%%%%%
We now use the standard Euler scheme and a well-known {two-stage Runge-Kutta (RK2)} scheme, namely, the explicit trapezoidal scheme  (see \cite{Ascher}) for the purpose of comparing them with the NSFD schemes. Approximations obtained from these standard schemes are sketched in Figure \ref{Fig:2}. It is clear that the standard Euler scheme generates an unstable numerical solution, which oscillates around the equilibrium position. Hence, the dynamics of the continuous model cannot be preserved. The similar unusual phenomena occurs when using the standard RK2 scheme with step sizes $\Delta t = 1$ and $\Delta t = 1.2$. More clearly, the RK2 scheme using $\Delta t = 1.2$ generates the approximation oscillating around the equilibrium point and when $\Delta t = 1.0$, the corresponding numerical solution converges to a spurious equilibrium point. This result is completely consistent with analysis performed by Mickens in \cite{Mickens1, Mickens2, Mickens3, Mickens4, Mickens5}.\par
%%%
Before ending this example, we compute errors obtained from the NSFD scheme \eqref{eq:16}, the nonstandard Euler scheme \eqref{eq:16.1} and the standard Euler scheme. {Note that theses schemes are all convergent of order $1$ since $\phi(\Delta t) = \Delta t + \mathcal{O}(\Delta t^2)$}. For this purpose, we consider $N(0) = 2$ and use the approximation generated by the classical four-stage Runge-Kutta scheme (see \cite{Ascher}) with $\Delta t = 10^{-5}$ as a benchmark (reference) solution since it is impossible to find the exact solution. Errors provided by the three numerical schemes are shown in Tables \ref{tabl1} and \ref{tabl2}. In these tables, $error$ and $error_T$ are computed by
\begin{equation*}
error = \max_{1 \leq k \leq N}|N(t_k) - N_k|, \quad error_T = \sum_{i = 1}^N|N(t_k) - N_k|. 
\end{equation*}
%%%%%%%%%%%%
From Tables \ref{tabl1} and \ref{tabl2}, we see that the errors provided by the NSFD scheme \eqref{eq:16} are better. The errors of the nonstandard Euler scheme are better than the ones provided by the standard Euler scheme. Hence, advantages of the nonstandard schemes in term of errors are shown.
\end{example}
%%%%%%%%%%%%
\newpage
%\begin{figure}[H]
%\centering
%\includegraphics[height=8.5cm,width=14cm]{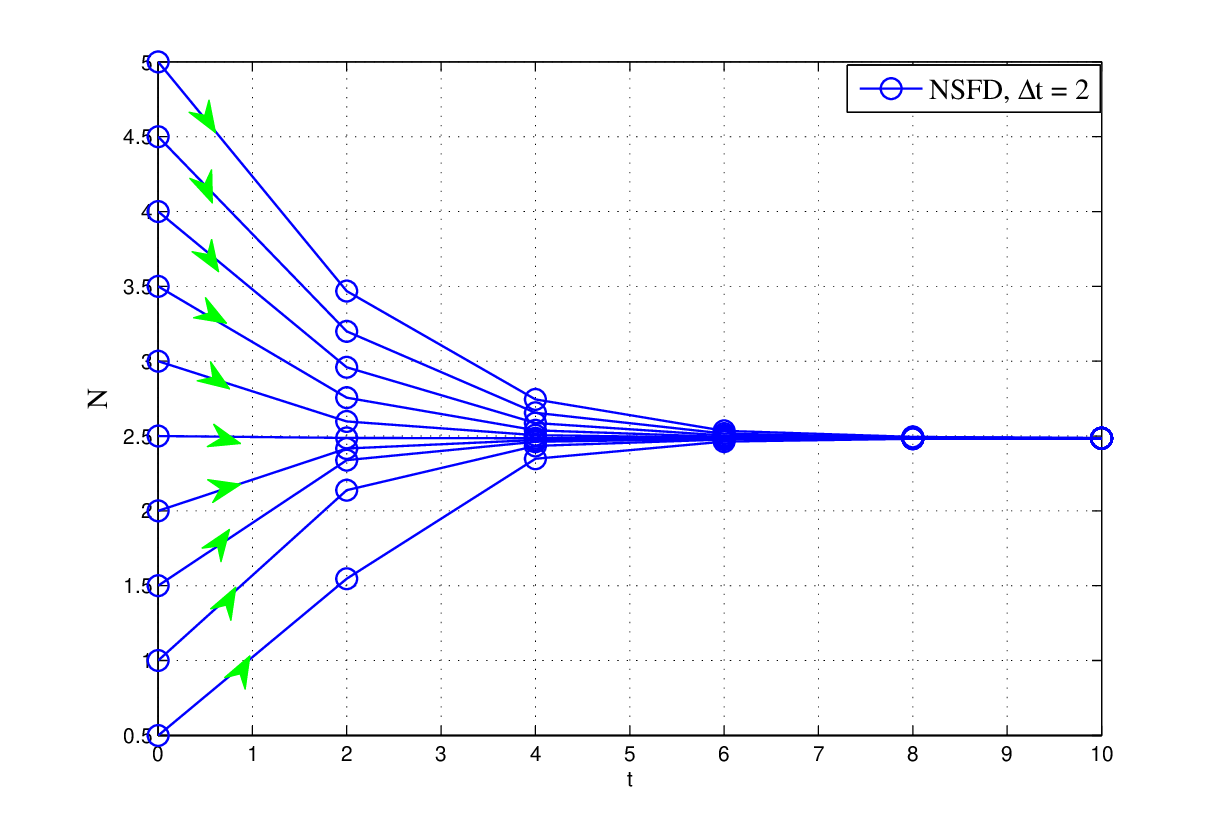}
%\caption{Numerical approximations generated  by the NSFD method \eqref{eq:16} with $\Delta t = 2.0$ and $m = 2.5$.}\label{fig:1}
%\end{figure}
%%%%
%\begin{figure}[H]
%\centering
%\includegraphics[height=8.5cm,width=14cm]{fig2.eps}
%\caption{Numerical approximations generated  by the NSFD method \eqref{eq:16} with $\Delta t = 1.0$ and $m = 2.5$.}\label{fig:2}
%\end{figure}
%%%
%\begin{figure}[H]
%\centering
%\includegraphics[height=8.5cm,width=14cm]{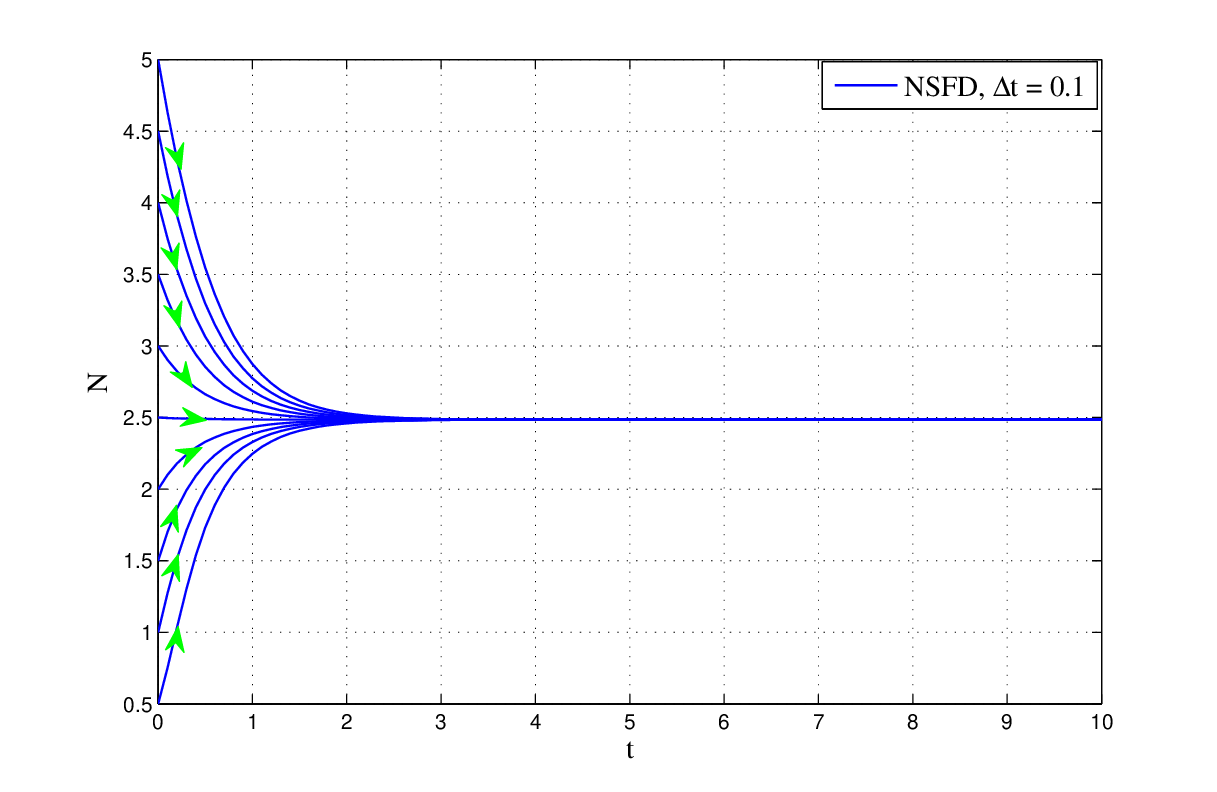}
%\caption{Numerical approximations generated  by the NSFD method \eqref{eq:16} with $\Delta t = 0.1$ and $m = 2.5$.}\label{fig:3}
%\end{figure}
%%%%%%%%%%%
%%
%%
%%Figure  1 new
\begin{figure}[H]
\subfloat[NSFD scheme, $\Delta t = 2.0$]{%
\includegraphics[height=9cm,width=6cm]{fig1.eps}
\label{Figure:1a}
}\hfill
\subfloat[NSFD scheme, $\Delta t = 0.1$]{%
\includegraphics[height=9cm,width=6cm]{fig3.eps}
\label{Figure:1b}
}\hfill
%%%
%%%
\subfloat[Nonstandard Euler scheme, $\Delta t = 1.0$]{%
\includegraphics[height=9cm,width=6cm]{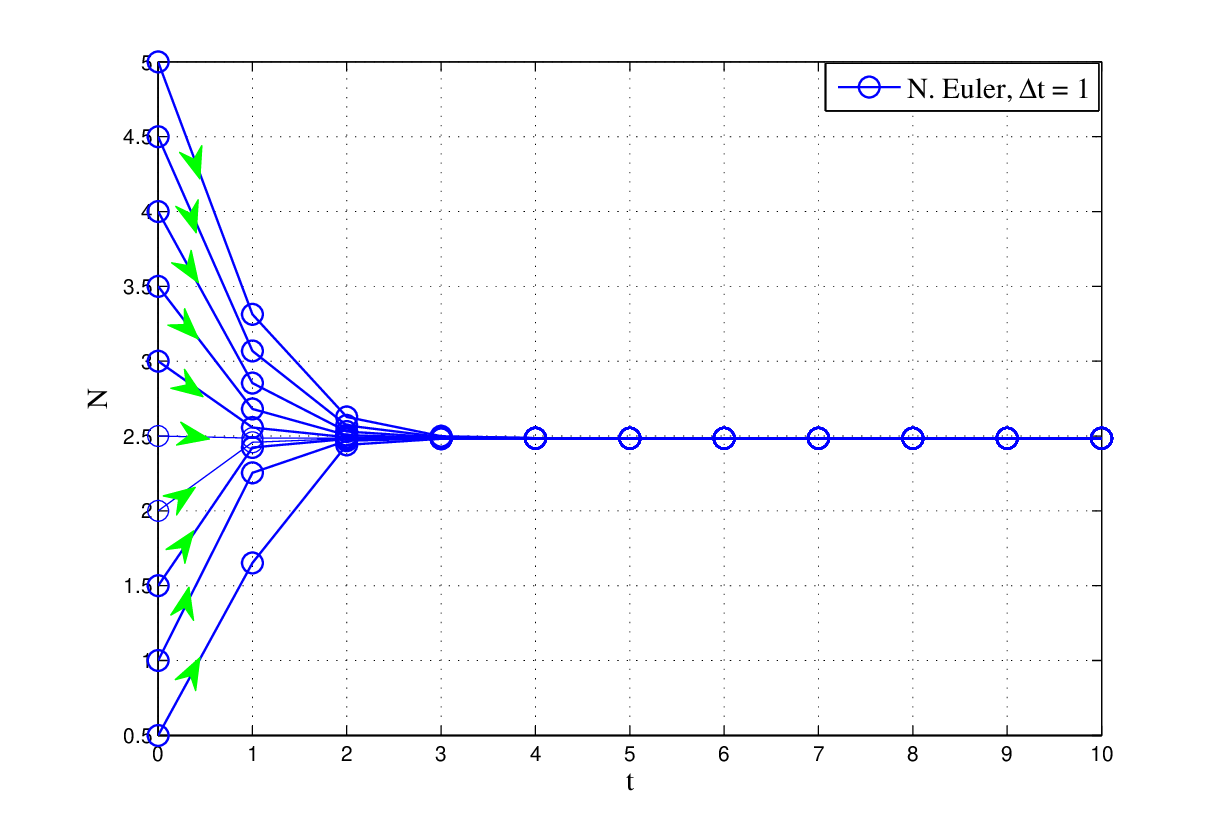}
\label{Figure:1c}
}\hfill
\subfloat[Nonstandard Euler scheme, $\Delta t = 0.1$]{%
\includegraphics[height=9cm,width=6cm]{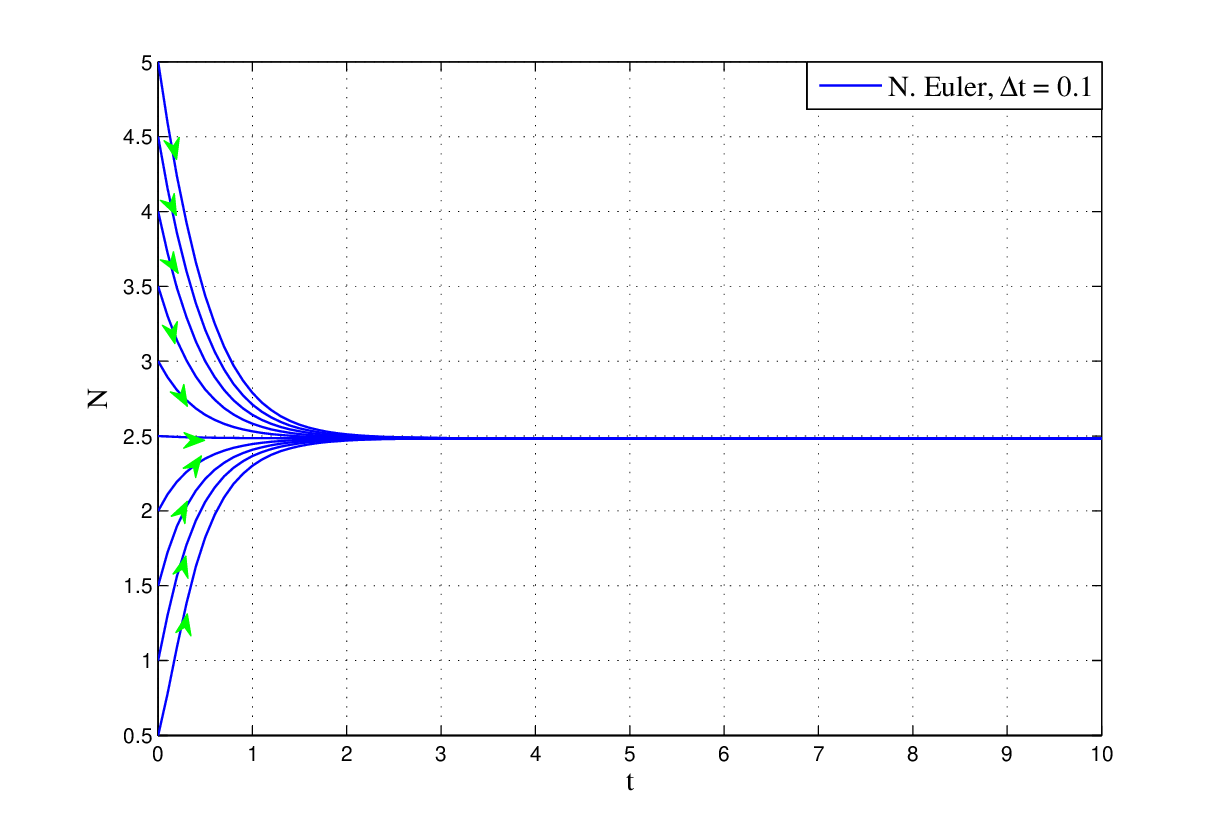}
\label{Figure:1d}
}
\caption{Numerical approximations of the model \eqref{eq:5} generated  by the NSFD scheme \eqref{eq:16} with $m = 2.5$ and $\Delta t \in \{2.0,\, 0.1\}$ and by the nonstandard Euler scheme \eqref{eq:16.1} with $\phi(\Delta t) = (1 - e^{-2.5\Delta t})/2.5$ and $\Delta t \in \{1, 0.1\}$.}
\label{Fig:1}
\end{figure}
\begin{figure}[H]
\subfloat[Euler scheme using $\Delta t = 1.0$.]{%
\includegraphics[height=10cm,width=6cm]{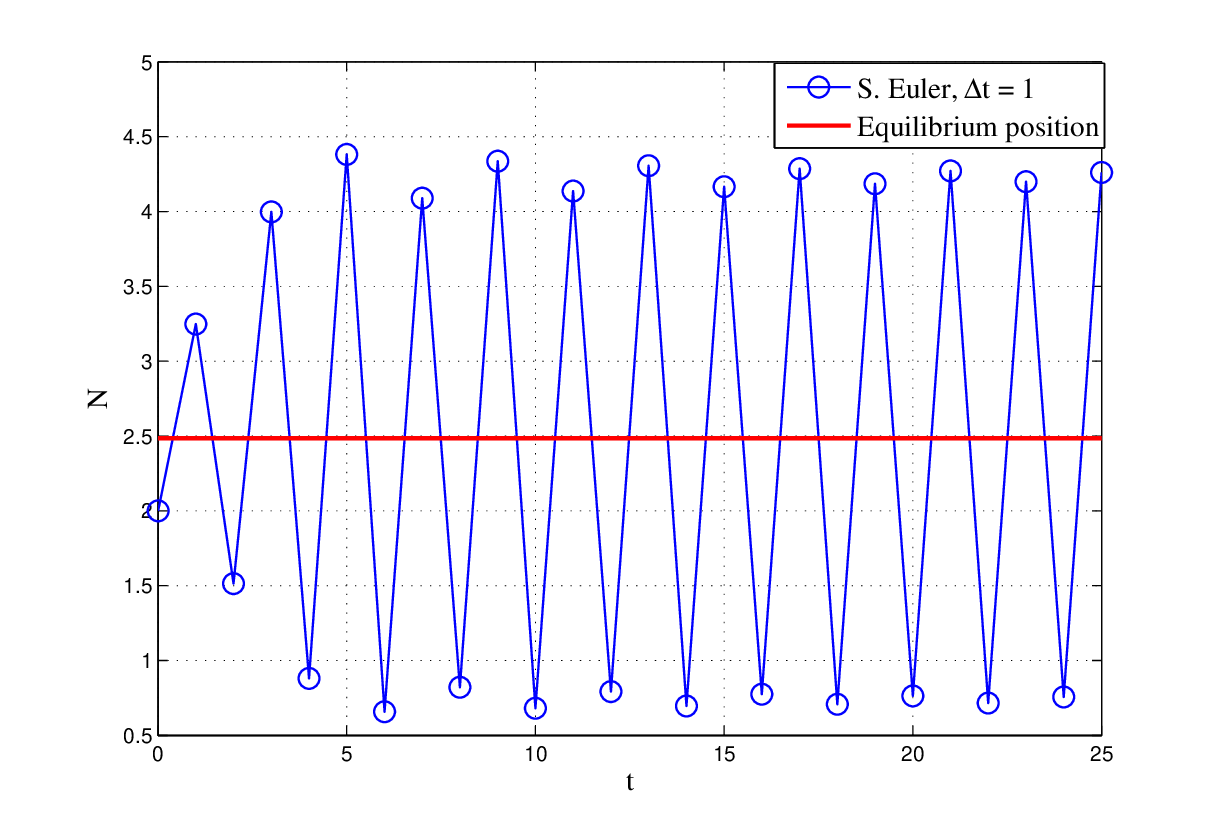}
\label{Figure:2a}
}\hfill
\subfloat[RK2 scheme using $\Delta t = 1.0$ and $\Delta t = 1.2$]{%
\includegraphics[height=10cm,width=6cm]{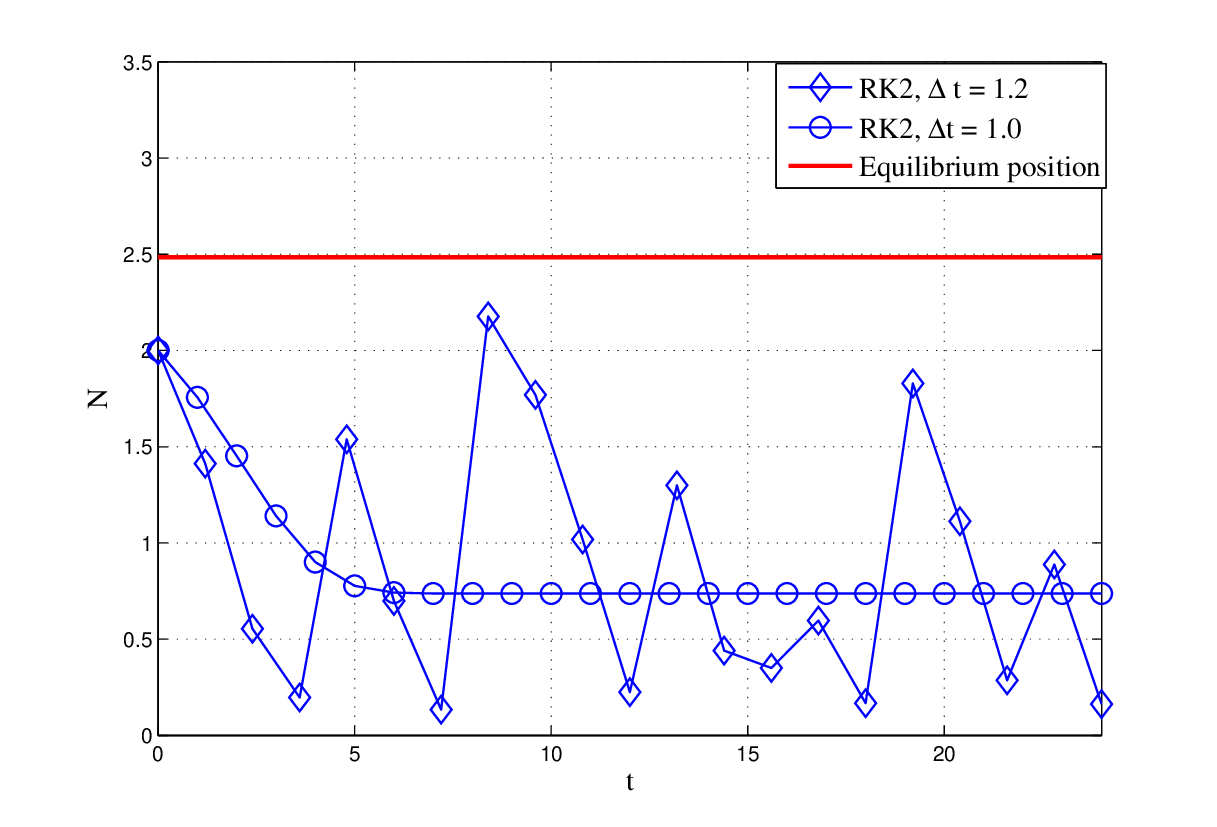}
\label{Figure:2b}
}
\caption{Numerical approximation of the model \eqref{eq:5}  generated  by the standard Euler and RK2 schemes.}
\label{Fig:2}
\end{figure}
%\begin{figure}[H]
%\centering
%\begin{subfigure}{1.0\textwidth}
%\includegraphics[width=\textwidth]{fig6.eps}
%\caption{The standard Euler scheme using $\Delta t = 1.0$.}\label{fig:3a}
%\end{subfigure}
%\vfill
%%%
%%%
%\begin{subfigure}{1.0\textwidth}
%\includegraphics[width=\textwidth]{fig7.eps}
%\caption{The standard RK2 scheme using $\Delta t = 1.0$ and $\Delta t = 1.2$.}\label{fig:3b}
%\end{subfigure}
%\caption{Numerical approximation of the model \eqref{eq:5}  generated  by the standard Euler and RK2 schemes.}\label{fig:3}
%\end{figure}
%%%%%%%%%%%%%
%%%%%%%%%%%%%%%%%%%%%%%%%%
%\begin{figure}[H]
%\centering
%\includegraphics[height=8.5cm,width=14cm]{fig4.eps}
%\caption{Numerical approximations generated  by the nonstandard Euler scheme \eqref{eq:16.1} with $\Delta t = 1$ and $\phi(\Delta t) = (1 - e^{2.5\Delta t})/2.5$.}\label{fig:4}
%\end{figure}
%%%
%%%
%\begin{figure}[H]
%\centering
%\includegraphics[height=8cm,width=14cm]{fig5.eps}
%\caption{Numerical approximations generated  by the nonstandard Euler scheme \eqref{eq:16.1} with $\Delta t = 0.1$ and $\phi(\Delta t) = (1 - e^{-2.5\Delta t})/2.5$.}\label{fig:5}
%\end{figure}
%%%
%%%
%\begin{figure}[H]
%\centering
%\includegraphics[height=8cm,width=14cm]{fig6.eps}
%\caption{Numerical approximation generated  by the standard Euler scheme using $\Delta t = 1$.}\label{fig:6}
%\end{figure}
%%%%%%%%%%%%%
%\begin{figure}[H]
%\centering
%\includegraphics[height=8cm,width=14cm]{fig7.eps}
%\caption{Numerical approximations generated  by the RK2 scheme using $\Delta t = 1$ and $\Delta t = 1.2$.}\label{fig:7}
%\end{figure}
%%%%
%%%%%%%%
%%%%%%%%%
\begin{table}[H]
\begin{center}
\caption{The values of the $error$ of the NSFD scheme \eqref{eq:16} with $m_s = 1.5$, the nonstandard Euler scheme with $\phi(h) = (1 - e^{-1.5\Delta t})/1.5$ and the standard Euler scheme}\label{tabl1}
\begin{tabular}{ccccccccccccccccccccc}
\hline
$\Delta t$&NSFD $error$&N. Euler $error$&S. Euler $error$\\
\hline
$1$&$0.053229803837052$&$0.200391028658851$&$1.895818024330166$\\ 
%%%%%%%%%%%%%%
%%%%%%%%%%%%%%
$10^{-1}$&$0.002547712779198$&$0.010718671314132$&$0.025728835277831$\\
%%%%%%%%%%%%%%%%%%%%%
%%%%%%%%%%%%%%%%%%%%%
%%%%%%%%%%%%%%%%%%%%%
$10^{-2}$&$3.474294297345359e-004$&$9.794823732640623e-004$&$0.002320869273385$\\
%%%%%%%%%%%%
%%%%%%%%%%%%
$10^{-3}$&$3.568404980169859e-005$&$ 9.709160784732163e-005$&$2.298145180823497e-004$\\
%%%%%%%%%%
%%%%%%%%%%
$10^{-4}$&$3.577852512925972e-006$&$9.700655808853043e-006$&$2.295902635651714e-005$\\
%%%%%%%%%%%%%%
%%%%%%%%%%%%
$10^{-5}$&$3.578797715952931e-007$&$9.699803174001431e-007$&$2.295678681374369e-006$\\
%%%
\hline
\end{tabular}
\end{center}
\end{table}
%%%%%%%%%%%%%%%%%%%%%
%%%%%%%%%%%%%%%%%%%%%%%%%%%%%%%%%%%%%
\begin{table}[H]
\begin{center}
\caption{The values of the $error_T$ of the NSFD scheme \eqref{eq:16} with $m_s = 1.5$, the nonstandard Euler scheme with $\phi(h) = (1 - e^{-1.5\Delta t})/1.5$ and the standard Euler scheme}\label{tabl2}
\begin{tabular}{ccccccccccccccccccccc}
\hline
$\Delta t$&NSFD $error_T$&N. Euler $error_T$&S. Euler $error_T$\\
\hline
$1$&$0.056862533841534$&$0.257861518971072$&$15.530365282265436$\\ 
%%%%%%%%%%%%%%
%%%%%%%%%%%%%%
$10^{-1}$&$0.030063654051754$&$ 0.109862865600443$&$ 0.256967636704650$\\
%%%%%%%%%%%%%%%%%%%%%
%%%%%%%%%%%%%%%%%%%%%
%%%%%%%%%%%%%%%%%%%%%
$10^{-2}$&$0.039734990275286$&$0.103390564043856$&$0.247233541153000$\\
%%%%%%%%%%%%
%%%%%%%%%%%%
$10^{-3}$&$ 0.040703612013705$&$0.102744510888389$&$0.246264374931792$\\
%%%%%%%%%%
%%%%%%%%%%
$10^{-4}$&$0.040800485618428$&$0.102679929285270$&$0.246167506109657$\\
%%%%%%%%%%%%%%
%%%%%%%%%%%%
$10^{-5}$&$0.040810166884441$&$0.102673437145564$&$0.246157825305468$\\
%%%
\hline
\end{tabular}
\end{center}
\end{table}
%%%%%%%%%%%%%%%%%%%%
%%%%%%%%%%%%%%%%%%%%%%%%%%%%%%%%%%%%%%
%
\begin{example}[Numerical simulation of a predator-prey system]
{Consider predator-prey model with a Beddington-DeAngelis functional response and linear intrinsic growth rate of the prey population, which is derived from a general predator-prey model proposed by DeAngelis et al in \cite{DeAngelis}. The model under consideration is given by
\begin{equation}\label{eq:6}
\begin{split}
&\dfrac{dx}{dt} = x - \dfrac{Axy}{1 + x + y},\\
%%%
&\dfrac{dy}{dt} = \dfrac{Exy}{1 + x + y} - Dy,
\end{split}
\end{equation}
where $x$ and $y$ are the prey and predator population sizes, respectively, and all the parameters are assumed to be positive. We refer the readers to \cite{Dimitrov4} for details of this model.\par
%%%
%%%
Clearly, the model \eqref{eq:6} satisfies the condition (\textbf{C1)} with $\alpha = \max\{A - 1, D\}$. On the other hand, stability analysis in \cite{Dimitrov4} implies that \textbf{(C2)} holds if $A \ne E$. Numerical simulation of the model \eqref{eq:6} by NSFD schemes was also examined in \cite{Dimitrov2}.\par
%%%%%%%%%%%%%
Clearly, the model \eqref{eq:6} satisfies the condition (\textbf{C1)} with $\alpha = \max\{A - 1, D\}$. On the other hand, stability analysis in \cite{Dimitrov4} implies that \textbf{(C2)} holds if $A \ne E$. Numerical simulation of the model \eqref{eq:6} by NSFD schemes was also examined in \cite{Dimitrov2}.\par
}
We now consider the predator-prey model \eqref{eq:6} with the following parameters
\begin{equation*}
A = 6, \quad D = 5, \quad E = 7.
\end{equation*}
In this case, the model has a unique positive equilibrium point $(x^*, y^*) = (6, \,\,1.4)$ and the Jacobian matrix evaluating at $(x^*, y^*)$ is
\begin{equation*}
J(x^*, y^*) =
\begin{pmatrix}
0.7143&   -3.5714\\
&\\
0.3333&   -0.8333
\end{pmatrix}.
\end{equation*}
Consequently, its eigenvalues are
\begin{equation*}
\lambda_{1, 2} =  -0.0595 \pm 0.7692i,
\end{equation*}
which implies that $(x^*, y^*)$ is locally asymptotically stable.\par
By simple calculations, we determine the condition for the parameter $m$ of the NSFD scheme \eqref{eq:16} as
\begin{equation*}
m \geq \max\{m_P, m_S\} = 5.
\end{equation*}
Therefore, we can take $m = 5.1$.\par
%%%
Numerical approximations provided by the Euler and RK2 schemes and the NSFD scheme \eqref{eq:16} using $(x(0), y(0)) = (4.5, 1)$ are depicted in Figure  \ref{Fig:3}. We see that the phase plane generated by the Euler scheme  is unstable and spirals out from the equilibrium position; the RK2 scheme provides an approximation which oscillates around the equilibrium point; meanwhile, the approximations obtained from the NSFD scheme \eqref{eq:16} correctly preserves the dynamics of the predator-prey model.\par
Note that the condition for the denominator function $\phi$ of the nonstandard Euler scheme \eqref{eq:16.1} is
\begin{equation*}
\phi < \min\{\phi_P, \phi_S\} = \dfrac{1}{5}.
\end{equation*}
Hence, we can take $\phi(\Delta t) = (1 - e^{-\tau \Delta t})/\tau$, where $\tau > 5$. Then, the dynamic consistency of the nonstandard Euler scheme is guaranteed.
%%%%%%%
%Figure 4
%
%\begin{figure}[H]
%\centering
%\begin{subfigure}{1.0\textwidth}
%\includegraphics[width=\textwidth]{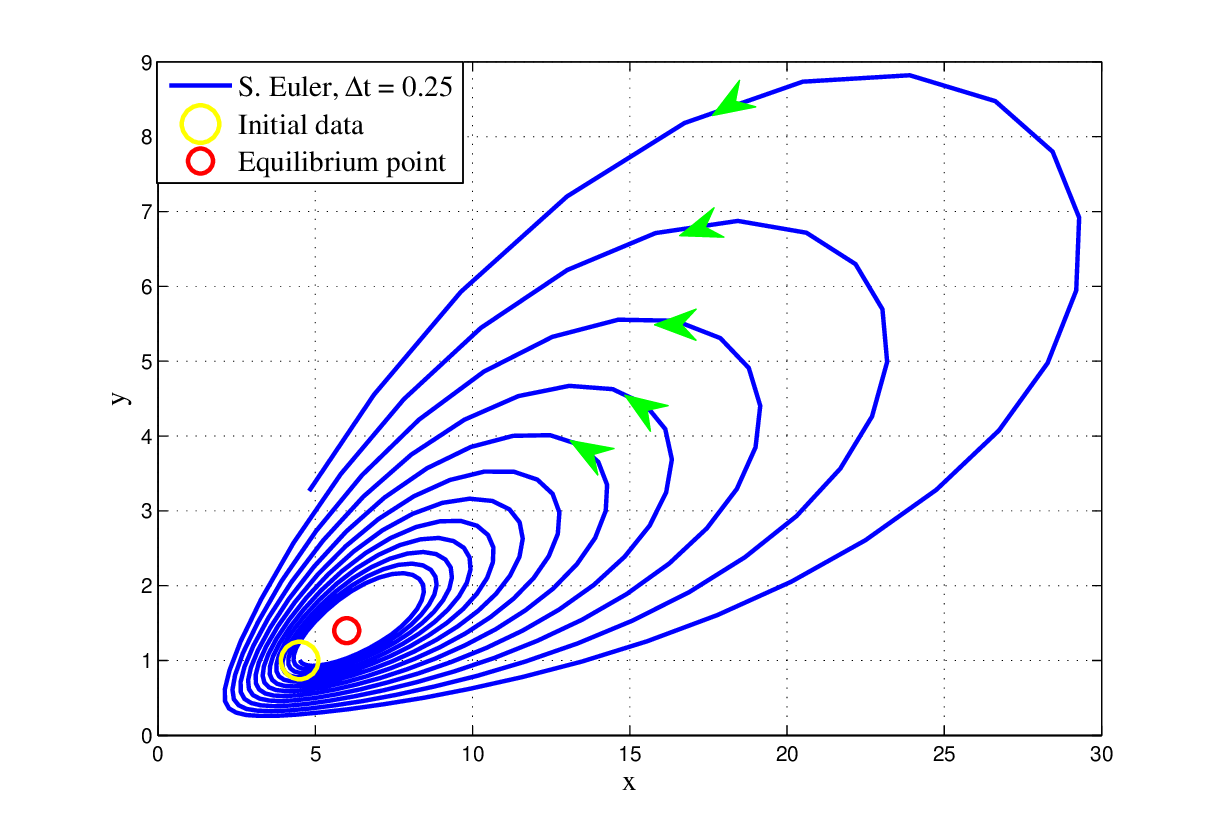}
%\caption{The Euler scheme using $\Delta t = 0.25$ after $400$ iterations.}\label{fig:4a}
%\end{subfigure}
%\vfill
%%%
%%%
%\begin{subfigure}{1.0\textwidth}
%\includegraphics[width=\textwidth]{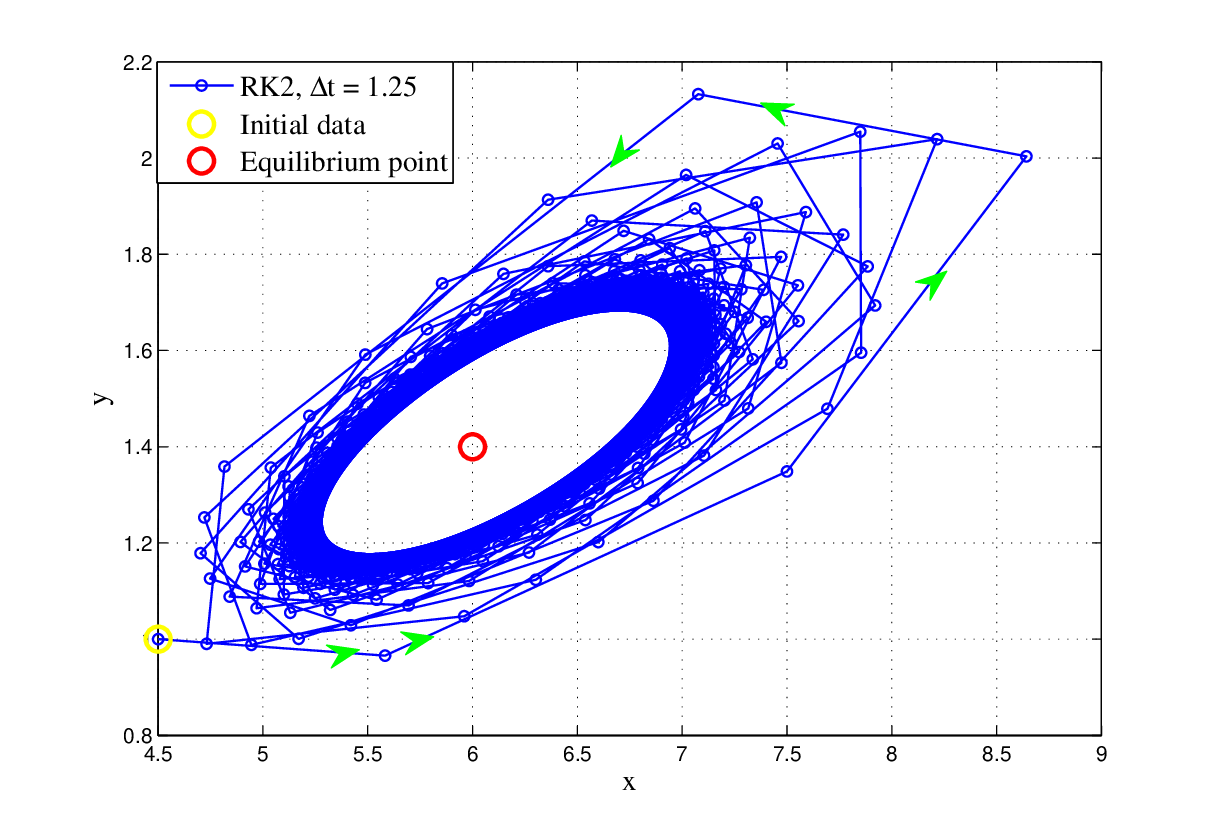}
%\caption{The RK2 scheme using $\Delta t = 1.25$ after $800$ iterations.}\label{fig:4b}
%\end{subfigure}
%\caption{Numerical approximations of the model \eqref{eq:6} generated  by the standard Euler and RK2 schemes.}\label{fig:4}
%\end{figure}
%%%%%%%%%%%%%%%%
%%%%%%%%%%%%%%%%%%
%Figure 5
%\begin{figure}[H]
%\centering
%\begin{subfigure}{1.1\textwidth}
%\includegraphics[width=\textwidth]{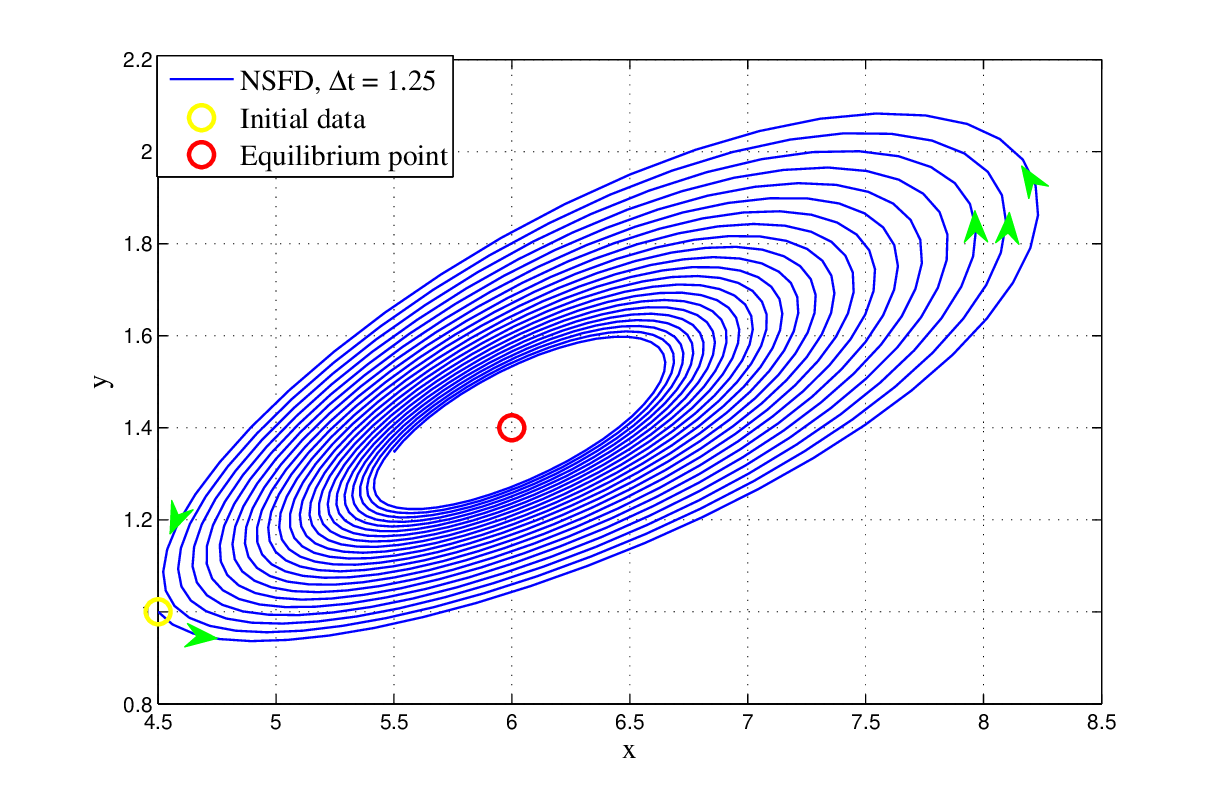}
%\caption{$\Delta t = 1.25$ and $800$ iterations.}\label{fig:5a}
%\end{subfigure}
%\vfill
%%%
%%%
%\begin{subfigure}{1.1\textwidth}
%\includegraphics[width=\textwidth]{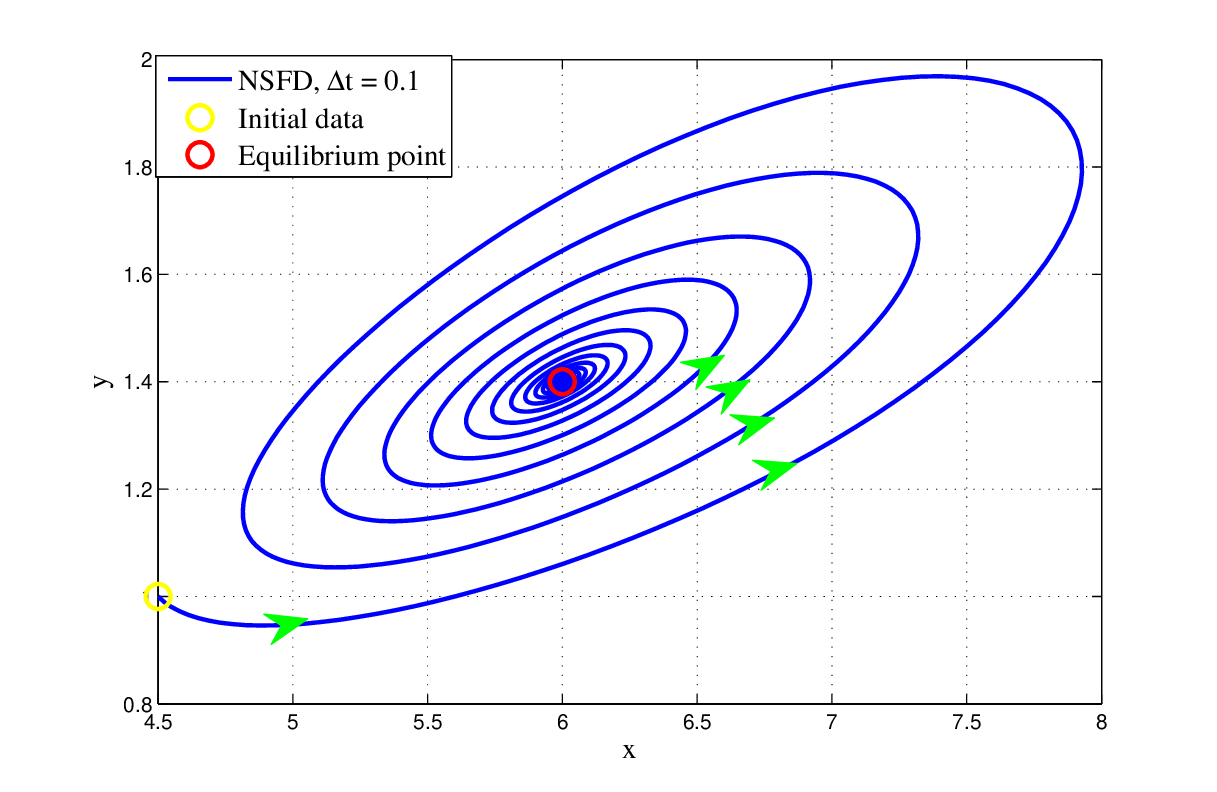}
%\caption{$\Delta t = 0.1$ and $10^5$ iterations.}\label{fig:5b}
%\end{subfigure}
%\caption{Numerical approximations of the model \eqref{eq:6} generated by the NSFD scheme \eqref{eq:16}.}\label{fig:5}
%\end{figure}
%%
%Figure 3 new
\begin{figure}[H]
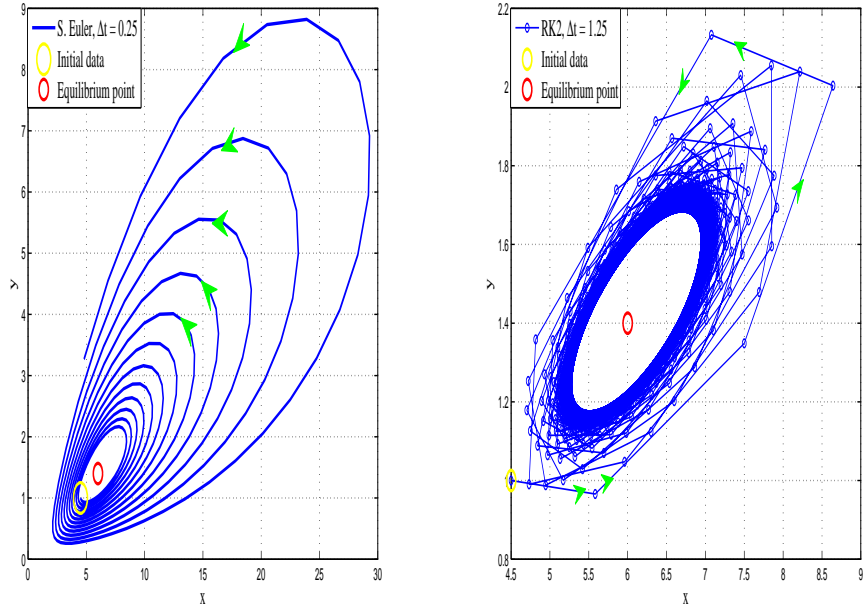
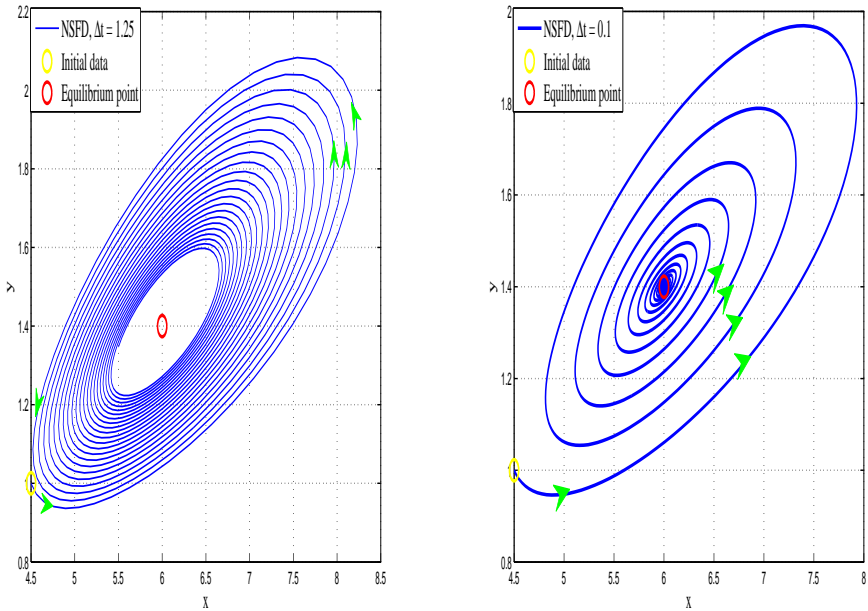

\subfloat[Euler scheme using $\Delta t = 0.25$ and $400$ iterations]{%
\includegraphics[height=9cm,width=6cm]{fig15.eps}
\label{Figure:3a}
}\hfill
\subfloat[RK2 scheme using $\Delta t = 1.25$ and $800$ iterations.]{%
\includegraphics[height=9cm,width=6cm]{fig16.eps}
\label{Figure:3b}
}\hfill
%%%
%%%
\subfloat[NSFD scheme using $\Delta t = 1.25$ and $800$ iterations]{%
\includegraphics[height=9cm,width=6cm]{fig17.eps}
\label{Figure:3c}
}\hfill
\subfloat[NSFD scheme using $\Delta t = 0.1$ and $10^5$ iterations]{%
\includegraphics[height=9cm,width=6cm]{fig18.eps}
\label{Figure:3d}
}
\caption{Numerical approximations of the model \eqref{eq:6} generated by the NSFD scheme \eqref{eq:16} and the standard Euler and RK2 schemes.}
\label{Fig:3}
\end{figure}
%%%%%%%%%%%%%%%%%%%%%%%
%%%
%\begin{figure}[H]
%\centering
%\includegraphics[height=8cm,width=14cm]{fig15.eps}
%\caption{Numerical approximation generated by the Euler scheme with $\Delta t = 0.25$ after $400$ iterations.}\label{fig:15}
%\end{figure}
%%%
%%%
%%%%
%\begin{figure}[H]
%\centering
%\includegraphics[height=8cm,width=14cm]{fig16.eps}
%\caption{Numerical approximation generated by the RK2 scheme with $\Delta t = 1.25$ after $800$ iterations.}\label{fig:16}
%\end{figure}
%%
%%%
%\begin{figure}[H]
%\centering
%\includegraphics[height=8cm,width=14cm]{fig17.eps}
%\caption{Numerical approximation generated by the NSFD scheme \eqref{eq:16} with $\Delta t = 1.25$ after $800$ iterations.}\label{fig:17}
%\end{figure}
%%%
%%%%
%\begin{figure}[H]
%\centering
%\includegraphics[height=8cm,width=14cm]{fig18.eps}
%\caption{Numerical approximation generated by the NSFD scheme \eqref{eq:16} with $\Delta t = 0.1$ after $10^5$ iterations.}\label{fig:18}
%\end{figure}
\end{example}
%
%
%%
%
%%%%%
%%%%%%
%%%%%%
\begin{example}[Numerical simulation of a vaccination model]
{
%\subsection{A vaccination model with non-linear incidence}\label{subsec3.4}
Consider an infectious disease with a preventive (prophylactic) vaccine and an effective therapeutic treatment proposed by Gumel and Moghadas in \cite{Gumel1}. This model monitors the temporal dynamics of susceptible individuals ($S$), vaccinated individuals ($V$) and infected individuals $(I)$ and is described by 
\begin{equation}\label{eq:8}
\begin{split}
&\dfrac{dS}{dt} = \Pi - \dfrac{c\beta_1I}{1 + I}S - \xi S + \alpha^* I - \mu S,\\
%%%
&\dfrac{dV}{dt} = \xi S - \dfrac{c\beta_2I}{1 + I}V - \mu V,\\
%%%
&\dfrac{dI}{dt} = \dfrac{c\beta_1I}{1 + I}S + \dfrac{c\beta_2I}{1 + I}V - \alpha^* I - \mu I.
\end{split}
\end{equation}
Here, all the parameters are positive. Mathematical formulation and qualitative study for the model \eqref{eq:8} were performed in \cite{Gumel1}.\par
%%%
It is easy to check that the model \eqref{eq:8} satisfies the condition \textbf{(C1)} with $\alpha = \max\{c\beta_1 + \xi + \mu, c\beta_2 + \mu, \alpha^* + \mu\}$. Meanwhile, the condition \textbf{(C2)} holds if the basic reproduction number (BRN) of the model satisfies
\begin{equation*}
\mathcal{R}_0 = \dfrac{c\beta_1\Pi}{(\mu + \alpha^*)(\mu + \xi)} + \dfrac{c\beta_2\Pi\xi}{\mu(\mu + \alpha^*)(\mu + \xi)} \ne 1.
\end{equation*}
Also, the GCL is satisfied because
\begin{equation*}
\dfrac{d(S + V + I)}{dt} = \Pi - \mu(S + V + I).
\end{equation*}
}
In this example, we consider the vaccination model \eqref{eq:8} with the following parameters
\begin{equation*}
\begin{split}
&\Pi = 700, \quad \beta_1 = 10^{-4}, \quad \beta_2 = 10^{-6}, \quad \mu = 0.03 \qquad (\mbox{see \cite{Gumel1}}),\\
&\alpha^* = 0.95, \qquad c = 8, \qquad \xi = 0.95, \qquad\qquad\qquad\quad\,\, (\mbox{assumed}).
\end{split}
\end{equation*}
In this case, the BRN is $\mathcal{R}_0 =  0.7677 < 1$ and the model has a unique disease-free (uninfected) equilibrium point $E^0 = (S^0, V^0, I^0) = (714.2857,\,\,  2.2619 \times 10^4,\,\, 0)$, which is locally asymptotically stable (see \cite{Gumel1}).\par
We now compute the parameter $m$ for the NSFD scheme \eqref{eq:16}. First, it is easy to see that
\begin{equation*}
m_P = \max\{c\beta_1 + \xi + \mu, c\beta_2 + \mu, \alpha + \mu\} = 0.98.
\end{equation*}
Since the Jacobian matrix of the model evaluating at $E^*$ has three eigenvalues, that are
\begin{equation*}
\lambda_1 = -0.98, \quad \lambda_2 = -0.03, \quad \lambda_3 =  -0.23,
\end{equation*}
we obtain
\begin{equation*}
m_S = 0.49. 
\end{equation*}
Note that the model satisfying a GCL, namely,
\begin{equation*}
\dfrac{d(S + V + I)}{dt} = \Pi - \mu(S + V + I).
\end{equation*}
Consequently, the condition for the preservation of the GCL is
\begin{equation*}
m \geq m_{GCL} = {\mu} = 0.03.
\end{equation*}
Therefore, the condition for the parameter $m$ is
\begin{equation*}
m \geq \max\{m_P, m_S, m_{GCL}\} = 0.98.
\end{equation*}
In a similar way, we can determine the condition for the nonstandard Euler scheme as
\begin{equation*}
\phi(\Delta t) = \dfrac{1 - e^{-\tau\Delta t}}{\tau}, \quad \tau > (0.98)^{-1} \approx 1.02.
\end{equation*}
\end{example}
Numerical solutions obtained from the standard Euler, RK2 and NSFD scheme are depicted in Figure \ref{Fig:4}, respectively. In these figures, each blue curve represents a phase space corresponding to a specific initial value, the green arrows show the evolution of the continuous model and the red circle indicates the position of the stable equilibrium point.\\
We observe from Figures \ref{Figure:4a} and \ref{Figure:4b} that the standard Euler scheme generates a negative and unstable numerical approximation, which oscillates around the equilibrium point with an increasing amplitude; the solution provided by the standard RK2 scheme is not positive and converges to a spurious equilibrium point; conversely, the NSFD scheme correctly preserves the dynamics of the continuous model regardless of the chosen step sizes (see Figures \ref{Figure:4c} and \ref{Figure:4d}).\par
Note that the vaccination model satisfies a GCL. For this reason, we now consider approximations of the total population generated by the Euler, RK2 and NSFD scheme \eqref{eq:16}. From numerical approximations  in Figure \ref{Fig:5}, we see that the NSFD schemes preserves the GCL but this property is not preserved by the standard schemes.
\newpage
%
%Figure 6
%\begin{figure}[H]
%\centering
%\begin{subfigure}{1.0\textwidth}
%\includegraphics[width=\textwidth]{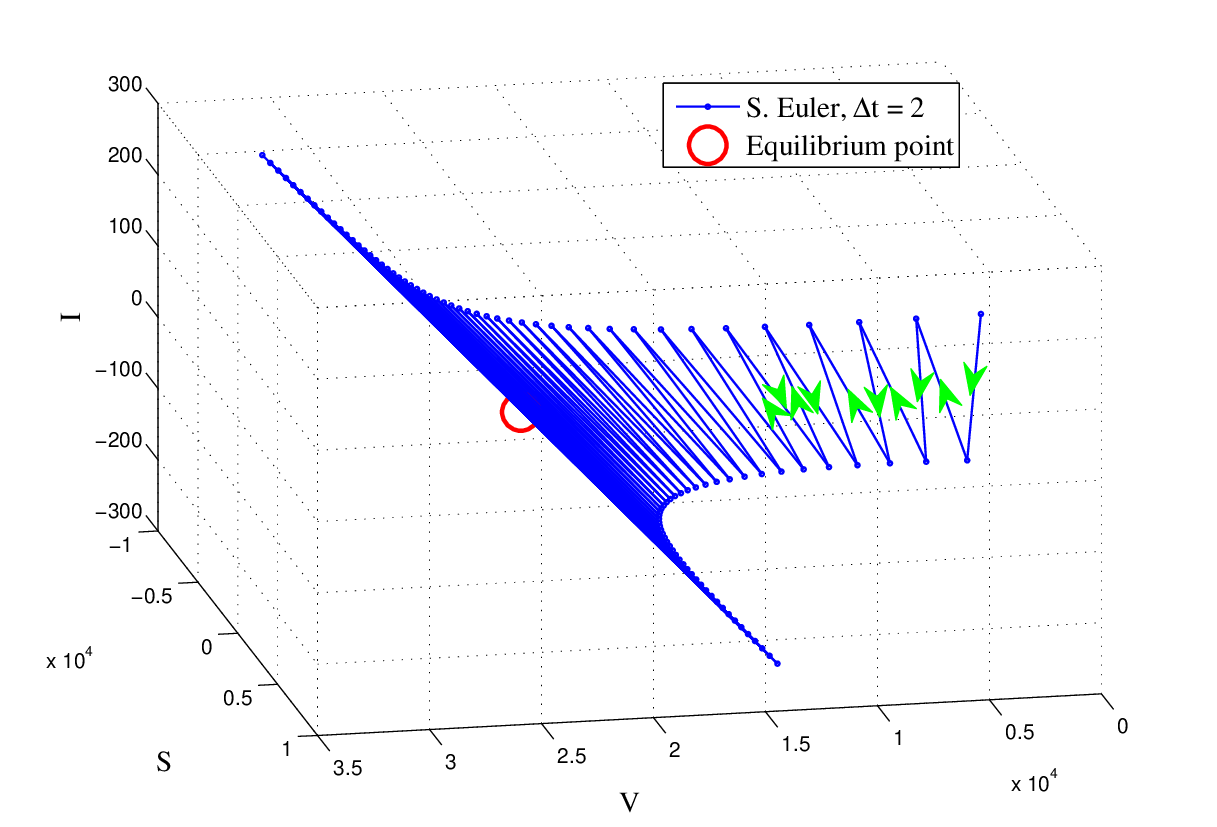}
%\caption{The Euler scheme with $(S(0), V(0), I(0)) = (500, 2000, 100)$ and $\Delta t = 2$ after $100$ iterations.}\label{fig:6a}
%\end{subfigure}
%%%
%%%
%\begin{subfigure}{1.0\textwidth}
%\includegraphics[width=\textwidth]{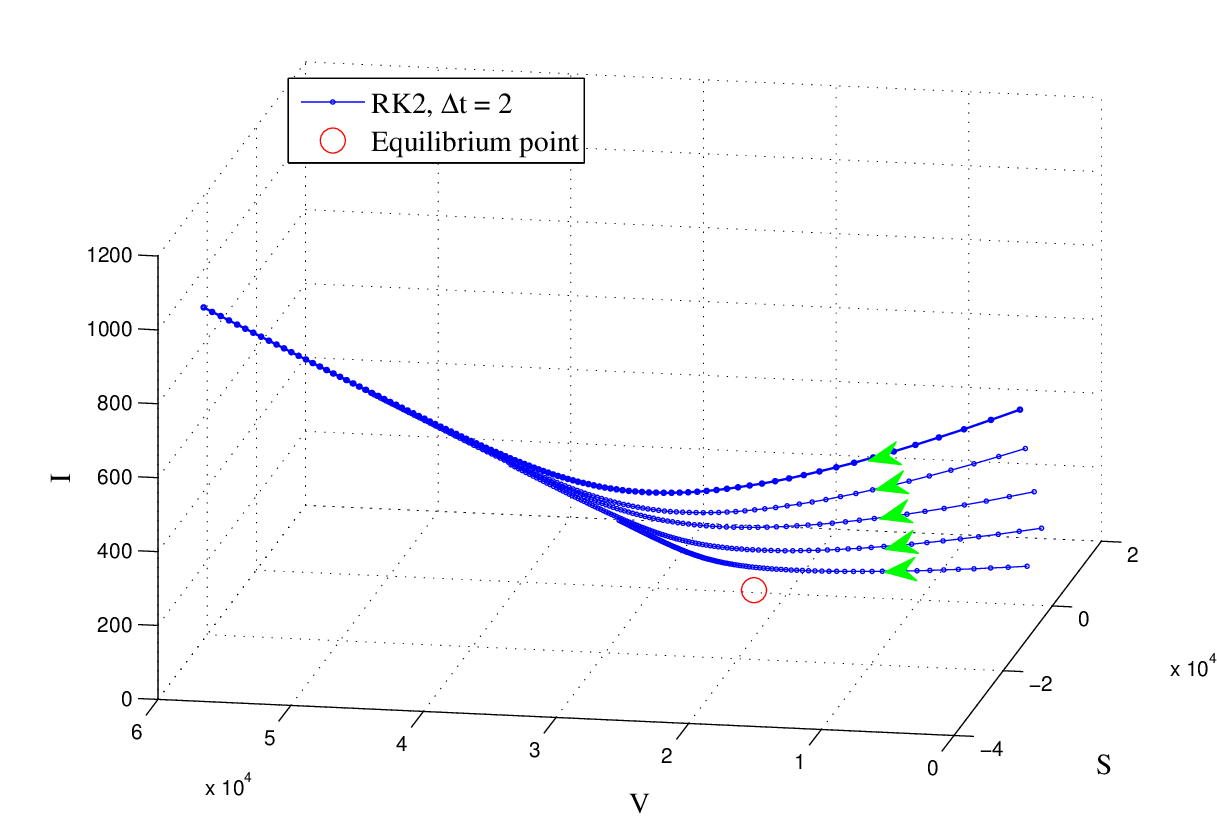}
%\caption{The RK2 scheme with some different initial data and $\Delta t = 2$ after $100$ iterations.}\label{fig:6b}
%\end{subfigure}
%\caption{Numerical approximations of the model \eqref{eq:8} generated by the standard Euler and RK2 schemes.}\label{fig:6}
%\end{figure}
%%
%%
%%
%%
%%Figure 7
%\begin{figure}[H]
%\centering
%\begin{subfigure}{1.0\textwidth}
%\includegraphics[width=\textwidth]{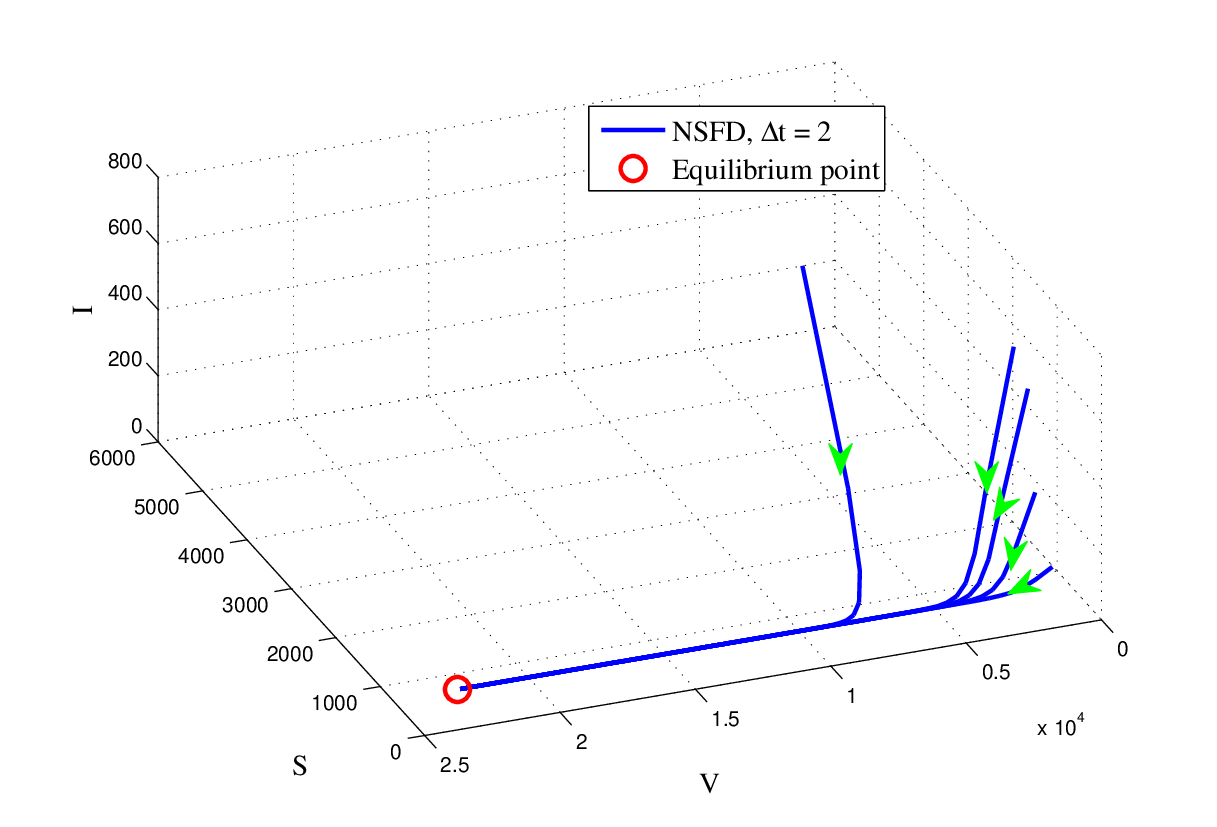}
%\caption{$\Delta t = 2.0$ after $250$ iterations.}\label{fig:7a}
%\end{subfigure}
%%%
%%%
%\begin{subfigure}{1.0\textwidth}
%\includegraphics[width=\textwidth]{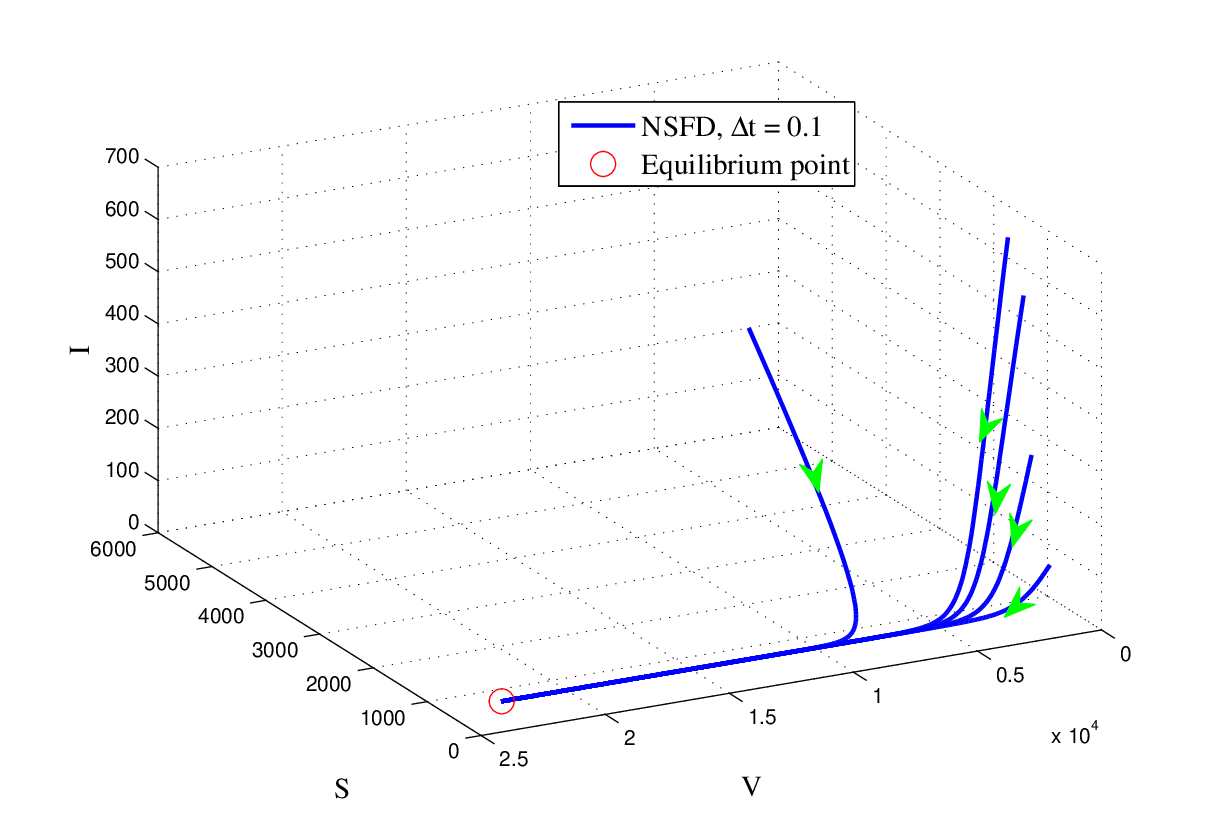}
%\caption{$\Delta t = 0.1$ after $5000$ iterations.}\label{fig:7b}
%\end{subfigure}
%\caption{Phase spaces of the model \eqref{eq:8} generated  by the NSFD scheme with some different initial data.}\label{fig:7}
%\end{figure}
%%%%%%%%%%%%%%
%%
%
%Figure 4 new
\begin{figure}[H]
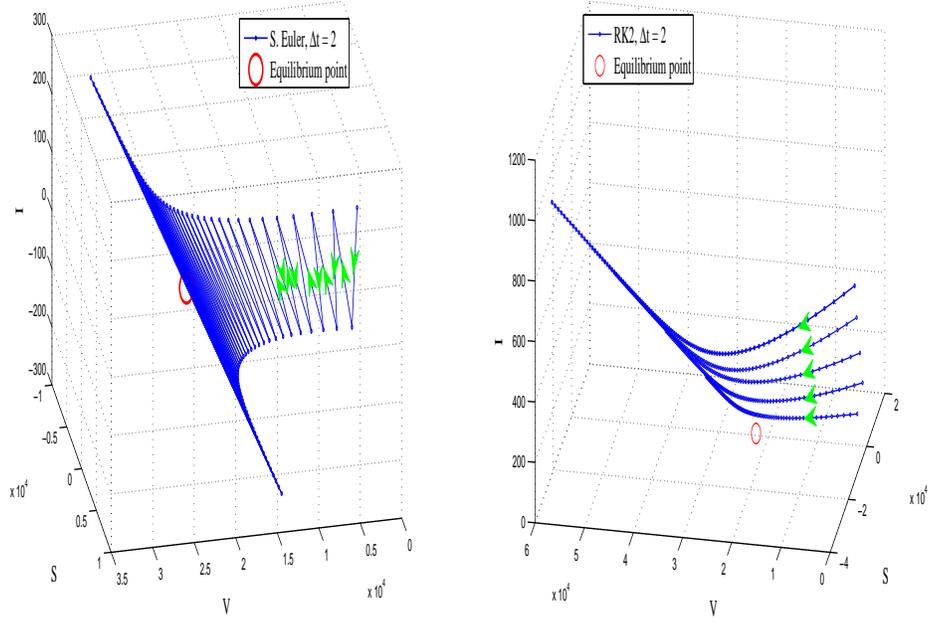
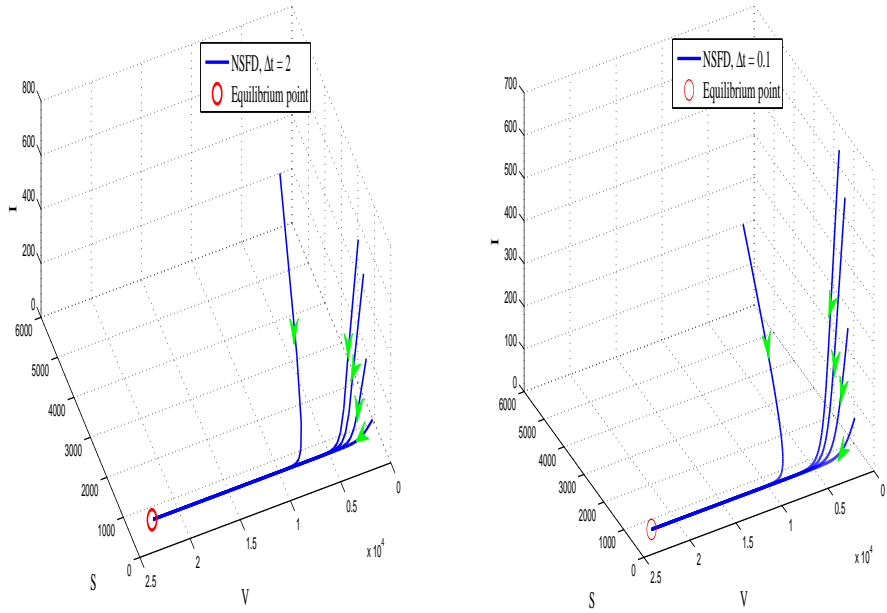

\subfloat[Euler scheme using $(S(0), V(0), I(0)) = (500, 2000, 100)$ and $\Delta t = 2$ after $100$ iterations]{%
\includegraphics[height=9cm,width=6cm]{fig8.eps}
\label{Figure:4a}
}\hfill
\subfloat[RK2 scheme using $\Delta t = 2$ after $100$ iterations]{%
\includegraphics[height=9cm,width=6cm]{fig9.eps}
\label{Figure:4b}
}\hfill
%%%
%%%
\subfloat[NSFD scheme with $\Delta t = 2.0$ after $250$ iterations]{%
\includegraphics[height=9cm,width=6cm]{fig10.eps}
\label{Figure:4c}
}\hfill
\subfloat[NSFD scheme with $\Delta t = 0.1$ after $5000$ iterations]{%
\includegraphics[height=9cm,width=6cm]{fig11.eps}
\label{Figure:4d}
}
\caption{Numerical approximations of the model \eqref{eq:8} generated by the standard Euler and RK2 schemes and by the NSFD scheme \eqref{eq:16}.}
\label{Fig:4}
\end{figure}
%%
%Figure 5 new
\begin{figure}[H]
\subfloat[Euler scheme using $\Delta t = 2.5$]{%
\includegraphics[height=9cm,width=6cm]{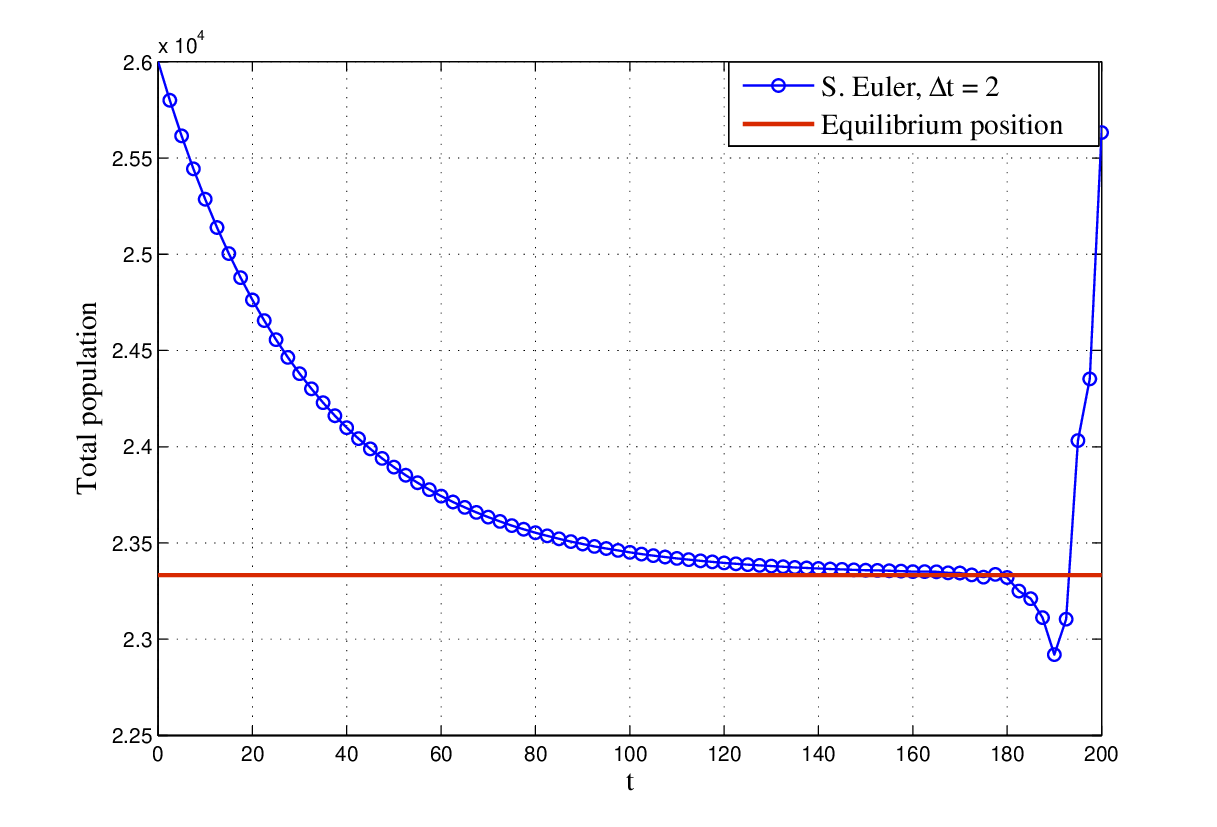}
\label{Figure:5a}
}\hfill
\subfloat[RK2 scheme using $\Delta t = 2.5$]{%
\includegraphics[height=9cm,width=6cm]{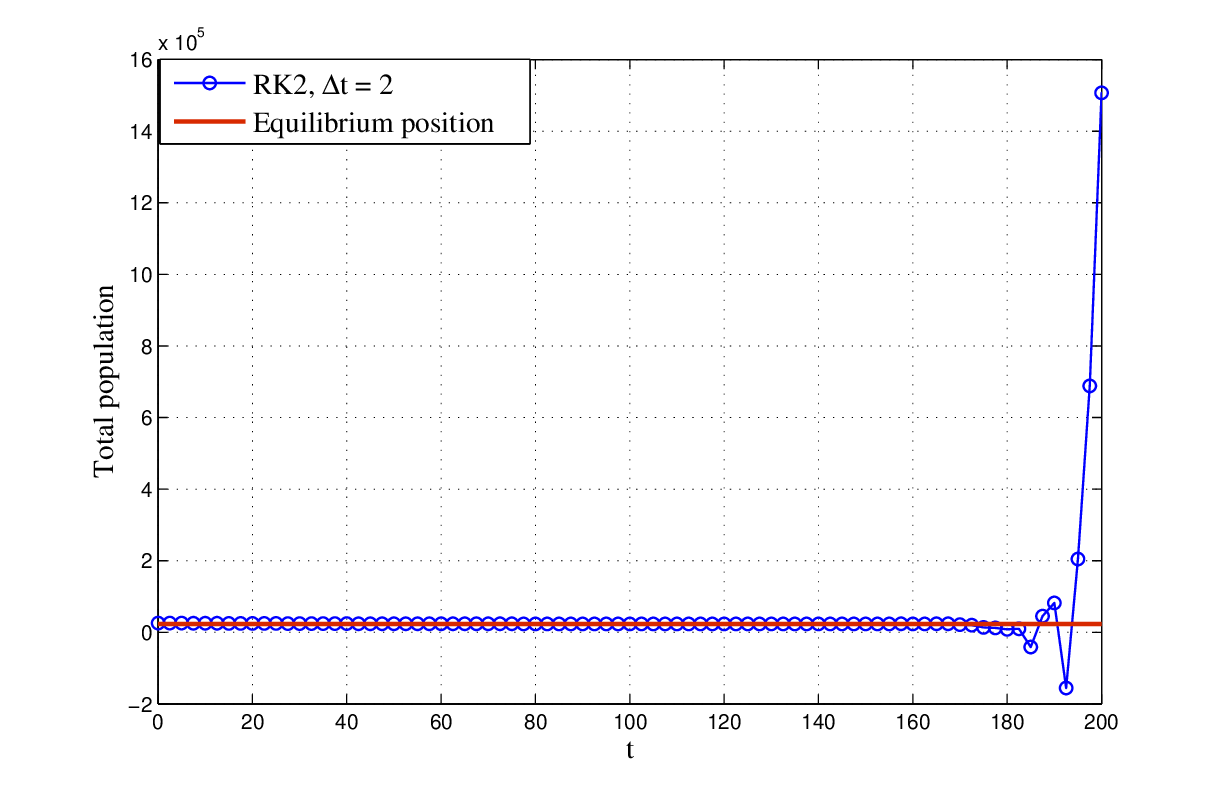}
\label{Figure:5b}
}\hfill
%%%
%%%
\subfloat[NSFD scheme using $\Delta t = 2.5$ and $\Delta t = 5.0$]{%
\includegraphics[height=9cm,width=12cm]{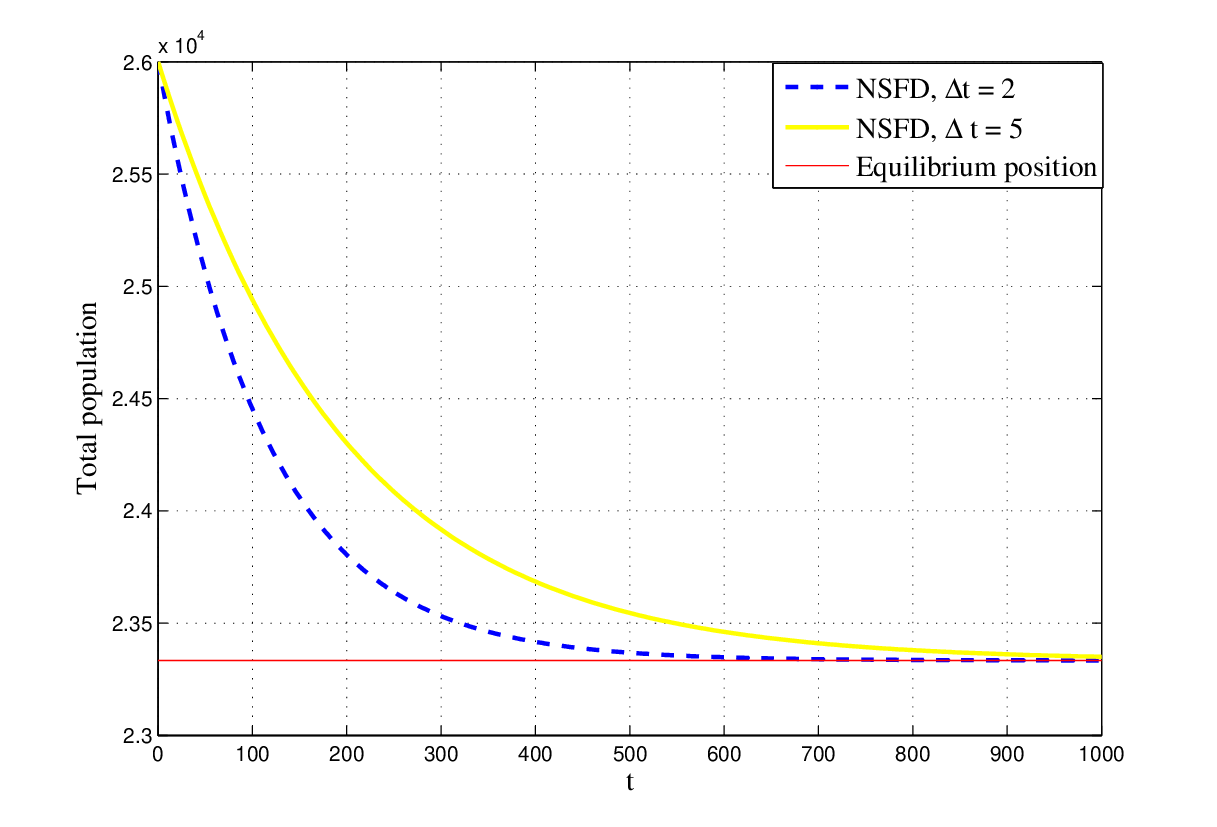}
\label{Figure:5c}
}
\caption{Approximations of the total population generated by the standard Euler and RK2 schemes and the NSFD scheme \eqref{eq:16} using $(S(0),\, V(0),\, I(0)) = (5000,\, 20000,\, 1000)$.}
\label{Fig:5}
\end{figure}

\section{Conclusions, discussions and some open problems}\label{sec6}
In this work, we have applied {the NSFD methodology proposed by Mickens} to construct a generalized NSFD method for a class of autonomous dynamical systems that satisfy positivity, stability and conservation laws.   This NSFD method is constructed based on a novel non-local approximation for the right-side functions of the dynamical systems.  By rigorous mathematical analyses, we have shown that the NSFD method is dynamically consistent with respect to the positivity, asymptotic stability and three classes of conservation laws, including direct conservation, generalized conservation and sub-conservation laws.
Additionally, a set of numerical experiments has been also performed to illustrate the theoretical findings and to show advantages of the proposed NSFD method over some well-known standard numerical schemes. The numerical results are not only consistent with the theoretical assertions but also indicate that the NSFD method is easy to be implemented and can be used to solve a broad range of mathematical models arising in real-life.\par
%%%%%%%%%%%%%%%%
It worth noting that the threshold parameters $(m_S, m_P, m_{GCL})$ for the NSFD method \eqref{eq:16} and $(\phi_P, \phi_S, \phi_{GCL})$ for the nonstandard Euler method \eqref{eq:16.1} can be computed easily. More clearly, these threshold parameters can be obtained by simple calculations or simple numerical algorithms.\par
%%%%%%%%%%%%%%%%%
Similarly to the NSFD methods formulated in \cite{Anguelov1, Dimitrov1, Dimitrov2}, the NSFD method \eqref{eq:16} also provides a stability-preserving numerical scheme for autonomous dynamical systems in general contexts. However, the positivity of the NSFD schemes in \cite{Anguelov1, Dimitrov1, Dimitrov2} is not guaranteed automatically while the NSFD method \eqref{eq:16} has the ability to preserve not only the positivity but also the conservation laws of the system \eqref{eq:1}.\par
%%%%%%%%%%%%%%%%
In \cite{Wood1},  Wood and Kojouharov introduced a nonstandard numerical method preserving the positivity and LAS of autonomous dynamical systems. This method is computationally efficient, easy
to be implemented and can be used to solve a broad range of problems in science and engineering. However, it requires the determination of the sign of the right-hand side functions at each iteration, and hence, the computational time is increased. For the NSFD method \eqref{eq:16}, we only need to compute the parameter $m$ exactly once and then use it for all iterations. Furthermore, the computation of $m$ is easy and can be done by a simple algorithm.\par
%%%%%%%%%%%%%%%%
Before ending this section, we mention some open problems and future directions that research in NSFD methods can take.
%%%%
\begin{enumerate}
\item {The NSFD method \eqref{eq:16} requires the determination of the number $\alpha$ in the condition \eqref{eq:2}. Although $\alpha$ can be determined easily for most of the models in literature, it is necessary to construct an algorithm for computing $\alpha$ in general cases. Even though many important mathematical models in real-word applications satisfy the condition \eqref{eq:2}, this condition leads to a big restriction of the application of the proposed NSFD method. Hence, it is important to study extensions of the NSFD method \eqref{eq:16} for mathematical models having the positivity and asymptotic stability but not {satisfying Condition \eqref{eq:2}}}.\par
\item NSFD methods preserving the asymptotic stability of non-hyperbolic equilibrium points: Most stability-preserving NSFD methods including the NSFD method \eqref{eq:16} require equilibrium points of dynamical systems to be hyperbolic. {The construction of generalized NSFD methods preserving the LAS of dynamical systems even when they have non-hyperbolic equilibrium points  has been studied in some previous work (see, for instance, \cite{Anguelov5})} but is still an important open problem.
%%%
\item Global asymptotic stability (GAS) of NSFD methods: It is well-known that analyzing the GAS of NSFD schemes for mathematical models having the GAS property  is very important but not a simple task in general. {This problem has been mentioned and considered in many previous works (see, for instance \cite{Al-Kahby, Anguelov2, DangHoang1, DangHoang2, DangHoang3, DangHoang4, DangHoang5, Hoang2, Roeger2, Roeger3})}. We observe from the numerical examples in Section \ref{sec5} that the NSFD method can preserve not only the LAS but also the GAS of the continuous models under consideration. Therefore, it is suitable to predict that there exists a positive number $m_{GAS}$ that plays as a GAS threshold of the NSFD method \eqref{eq:16}, i.e., the NSFD method \eqref{eq:16} preserves the GAS of the system \eqref{eq:1} whenever $m \geq m_{GAS}$. This prediction will be useful in studying the construction of NSFD methods preserving the GAS of dynamical systems.
\item High-order NSFD methods: High-order NSFD methods for differential equations have attracted the attention of many researchers with various approaches (see, for example \cite{Chen-Charpentier, DangHoang, Gonzalez-Parra, Hoang, Kojouharov, Martin-Vaquero1, Martin-Vaquero2}); hence, the improvement accuracy of the NSFD method \eqref{eq:16} is essential with many practice applications. A simple approach for improving accuracy of the NSFD method \eqref{eq:16} is to combine it with Richardson's extrapolation method or variable step size strategies. This is easy to do because \eqref{eq:16} is simple and fully explicit.\par
%%%%%%%%%%%%%
%%%%%%%%%%%%%%%
\item New NSFD methods for differential equation models arising in real-world applications: Although NSFD methods have been strongly developed and achieved many important results, fields of theory and practice always pose new and complex problems; hence, constructing effective NSFD methods for these problems is very essential. Recently, NSFD methods for fractional differential equations have been widely studied (see, for instance \cite{Arenas, Cresson1}). It was proved that NSFD methods for fractional-order systems are efficient and powerful; however, except for the positivity, other properties of fractional-order NSFD methods have not been widely discussed. It is easy to extend the NSFD method \eqref{eq:16} for systems in the context of fractional derivative operators. Then, the asymptotic stability in particular and qualitative study in general for the fractional-order NSFD schemes will be posed. It is appropriate to predict that the parameter $m$ under suitable conditions will guarantee the dynamic consistency of the proposed NSFD schemes.
\end{enumerate}
%%%%%%%%%%

\section*{Acknowledgments}
We would like to thank the editor and anonymous referees for useful and valuable comments that led to a
great improvement of the paper.

\bibliographystyle{amsalpha}

\end{document}